\title[Quantitative Alberti representations]{Quantitative Alberti representations \\ in spaces of bounded geometry}
\keywords{Alberti representations, spaces of $Q$-bounded geometry}
\author{Tuomas Orponen}
\address{University of Helsinki, Department of Mathematics and Statistics}
\subjclass[2010]{28A50 (Primary) 30LXX, 28A78 (Secondary)}
\thanks{T.O. is supported by the Academy of Finland via the project \emph{Quantitative rectifiability in Euclidean and non-Euclidean spaces}, grant No. 309365.}
\email{tuomas.orponen@helsinki.fi}
\newcommand{\R}{\mathbb{R}}
\newcommand{\N}{\mathbb{N}}
\newcommand{\C}{\mathbb{C}}
\newcommand{\Z}{\mathbb{Z}}
\newcommand{\tn}{\mathbb{P}}
\newcommand{\calD}{\mathcal{D}}
\newcommand{\calH}{\mathcal{H}}
\newcommand{\calR}{\mathcal{R}}
\newcommand{\spt}{\operatorname{spt}}
\newcommand{\calP}{\mathcal{P}}
\newcommand{\spa}{\operatorname{span}}
\newcommand{\diam}{\operatorname{diam}}
\newcommand{\card}{\operatorname{card}}
\newcommand{\dist}{\operatorname{dist}}
\renewcommand{\mod}{\mathrm{mod}}
\newcommand{\Lip}{\mathrm{Lip}}
\numberwithin{equation}{section}
\theoremstyle{plain}
\newtheorem{thm}[equation]{Theorem}
\newtheorem{lemma}[equation]{Lemma}
\newtheorem{cor}[equation]{Corollary}
\newtheorem{proposition}[equation]{Proposition}
\newtheorem{question}{Question}
\theoremstyle{definition}
\newtheorem{definition}[equation]{Definition}
\theoremstyle{remark}
\newcommand{\nref}[1]{(\hyperref[#1]{#1})}
\begin{document}

\begin{abstract} A metric measure space $(X,d,\mu)$ is said to be \emph{$A_{\infty}$ on curves} if there exist constants $\tau < 1$ and $\theta > 0$ with the following property. For every $x \in X$, $0 < r \leq \diam(X)$, and a Borel set $S \subset B(x,r)$ with $\mu(S) > \tau \mu(B(x,r))$, there exists a continuum $\gamma \subset X$ of length $\leq r$ satisfying $\calH^{1}_{\infty}(\gamma \cap S) \geq \theta r$. 

I first observe that spaces of $Q$-bounded geometry, $Q > 1$, are $A_{\infty}$ on curves. Then, I show that any complete, doubling, and quasiconvex space $(X,d,\mu)$ which is $A_{\infty}$ on curves has Alberti representations with $L^{p}$-densities for some $p > 1$, depending only on the doubling and $A_{\infty}$-constants. More precisely, any normalised restriction of $\mu$ to a ball $B \subset X$ can be written as $\mu_{B} = f_{B} \, d\nu_{B}$, where $\nu_{B}$ is a convex combination of measures of linear growth supported on continua of length $\leq \diam(B)$, and $\|f_{B}\|_{L^{p}(\nu_{B})} \leq C$ for some constant $C \geq 1$ independent of $B$.
\end{abstract}

\maketitle

\tableofcontents

\section{Introduction}

Let $Q > 1$, and consider a space $(X,d,\mu)$ of \emph{$Q$-bounded geometry} (see \cite{MR1869604} for the origin of this terminology): $(X,d)$ is proper and path connected, $\mu$ is Borel and $Q$-regular, that is,
\begin{displaymath} \mu(B(x,r)) \sim r^{Q}, \qquad x \in X, \: 0 < r \leq \diam(X), \end{displaymath}
and $X$ supports a weak $(1,Q)$-Poincar\'e inequality: there exist constants $C,\lambda \geq 1$ such that whenever $x \in X$, $r > 0$, $u \in L^{1}_{\mathrm{loc}}(\mu)$, and $g \geq 0$ is an upper gradient of $u$, then
\begin{equation}\label{poincare} \fint_{B(x,r)} |u - u_{B(x,r)}| \, d\mu \leq Cr \left(\fint_{B(x,\lambda r)} g^{Q} \, d\mu \right)^{1/Q}. \end{equation} 
Recall that a Borel function $g \colon X \to [0,\infty]$ is an upper gradient of $u$ if 
\begin{displaymath} |u(x) - u(y)| \leq \int_{\gamma} g \, d\calH^{1} \end{displaymath}
for all rectifiable continua $\gamma \subset X$ joining $x$ to $y$. Recall that a \emph{continuum} is a compact and connected (subset of a) metric space, and a \emph{rectifiable continuum} is a continuum of finite $\calH^{1}$-measure. For more information on Poincar\'e inequalities and upper gradients, see for example \cite{MR3363168}. The properties above imply that $(X,d)$ is quasiconvex, see \cite[Proposition 4.4]{MR1683160}. Also, a space $(X,d,\mu)$ with the properties above is \emph{$Q$-Loewner}, see \cite[Theorem 5.7]{HK}: there is a function $\psi \colon (0,\infty) \to (0,\infty)$ so that whenever $E,F \subset X$ are non-degenerate continua satisfying $\dist(E,F) \leq t\min\{\diam(E),\diam(F)\}$, then
\begin{equation}\label{loewner} \mod_{Q}(E,F) \geq \psi(t). \end{equation}
The Loewner property is a way of formalising that $X$ contains plenty of rectifiable continua: by definition, \eqref{loewner} means that whenever $g \colon X \to [0,\infty]$ is a Borel function satisfying
\begin{displaymath} \int_{\gamma} g \, d\calH^{1} \geq 1 \end{displaymath} 
for all continua $\gamma$ intersecting both $E$ and $F$, then 
\begin{displaymath} \int g^{Q} \, d\mu \geq \psi(t). \end{displaymath}
Informally, one could summarise that the $L^{Q}(\mu)$-norm of any Borel function $g \colon X \to \R$ is dominated from below by the $L^{1}(\calH^{1}\llcorner \gamma)$-norm of $g$ for some (not-too-short) rectifiable continuum $\gamma \subset X$. In this paper, we investigate the possibility of a converse statement. Here is a (rather inaccurate) guiding question:
\begin{question}\label{mainQ} Is the $L^{1}(\mu)$-norm of any Borel function $g \colon X \to \R$ dominated from above by the $L^{p}(\calH^{1}\llcorner \gamma)$-norm of $g$ for some (not-too-long) rectifiable curve $\gamma \subset X$, for some $p < \infty$? \end{question}

Qualitative statements of this type have appeared in the literature on several occasions. For example, Shanmugalingam proves in \cite[Lemma 6.2]{MR1809341} (see also \cite[Theorem 6.2]{MR1809341}) that, under the hypotheses on $X$ above, any subset $E \subset X$ avoiding $\mod_{Q}$ almost every continuum has $\mu(E) = 0$. An even more closely related statement is \cite[Theorem 6.6]{MR3300699}, which implies in particular that if $(X,d,\mu)$ is a Lipschitz differentiability space, then any set $S \subset X$ intersecting every rectifiable continuum in $\calH^{1}$-measure zero has $\mu(S) = 0$. For illustrative purposes, we now include the following simple proposition:

\begin{proposition}\label{amuseBouche} Let $(X,d,\mu)$ be a complete and doubling metric measure space admitting the weak Poincar\'e $(1,Q)$-inequality \eqref{poincare} for some $Q \geq 1$. If $S \subset X$ is a Borel set with $\calH^{1}(S \cap \gamma) = 0$ for all rectifiable continua $\gamma \subset X$, then $\mu(S) = 0$. \end{proposition}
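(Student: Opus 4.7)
My plan is to argue by contradiction: assuming $\mu(S) > 0$, I will produce a single rectifiable continuum $\gamma \subset X$ with $\calH^{1}(\gamma \cap S) > 0$. First, by inner regularity of the doubling Borel measure $\mu$ (the space is proper, since complete plus metric-doubling), I may assume $S$ is compact. Lebesgue's differentiation theorem, valid in every doubling metric measure space, supplies a density point $x_{0} \in S$: for every $\eta > 0$ there is $r_{\eta} > 0$ such that $\mu(B(x_{0}, \lambda r) \setminus S) \leq \eta\, \mu(B(x_{0}, \lambda r))$ for all $0 < r \leq r_{\eta}$, where $\lambda \geq 1$ is the dilation constant of the weak Poincar\'e inequality \eqref{poincare}.

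The key step is a qualitative \emph{pencil of curves} statement in the doubling $(1,Q)$-Poincar\'e setting: there exist constants $C \geq 1$, $\theta > 0$, and $\eta_{0} > 0$, depending only on the doubling and Poincar\'e data, such that whenever $B = B(x, r)$ with $r \leq \diam(X)/2$ and $E \subset B(x, \lambda r)$ satisfies $\mu(E) \geq (1 - \eta_{0})\mu(B(x, \lambda r))$, there is a rectifiable continuum $\gamma \subset X$ of length $\leq Cr$ with $\calH^{1}(\gamma \cap E) \geq \theta r$. Granting this, I apply it with $x = x_{0}$, $r \leq r_{\eta_{0}}$, and $E := S \cap B(x_{0}, \lambda r)$ to obtain a continuum $\gamma$ with $\calH^{1}(\gamma \cap S) \geq \theta r > 0$, contradicting the hypothesis. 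This pencil statement is essentially the property of being ``$A_{\infty}$ on curves'' as defined in the abstract, applied here in its qualitative form.

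The main obstacle is establishing the pencil step under the weak hypotheses of the proposition---doubling together with $(1,Q)$-Poincar\'e, \emph{without} assuming $Q$-regularity. I would follow the Semmes--Heinonen--Koskela template: using quasiconvexity (itself a consequence of doubling plus $(1,Q)$-Poincar\'e, cf.\ \cite[Proposition 4.4]{MR1683160}), pick a point $y \in X$ with $d(x_{0}, y) \sim r$, and construct a probability measure $\pi$ on rectifiable curves from $x_{0}$ to $y$ inside $B(x_{0}, Cr)$, each of length $\sim r$, satisfying the transport-type inequality
\[ \int \left( \int_{\gamma} g \, d\calH^{1} \right) d\pi(\gamma) \leq C r \fint_{B(x_{0}, Cr)} g \, d\mu \]
for every Borel $g \geq 0$. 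Testing this with $g = \mathbf{1}_{B(x_{0}, \lambda r) \setminus E}$ forces a $\pi$-positive fraction of curves $\gamma$ to satisfy $\calH^{1}(\gamma \setminus E) \leq C' r \eta_{0}$; since $\ell(\gamma) \gtrsim r$ for each such curve, this yields $\calH^{1}(\gamma \cap E) \geq \ell(\gamma) - C' r \eta_{0} \gtrsim r$ once $\eta_{0}$ is chosen small enough relative to the quasiconvexity and doubling constants. Everything else in the argument is soft measure theory.
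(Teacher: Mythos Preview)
Your route is genuinely different from the paper's, and considerably heavier. The paper proves this proposition in a few lines with no pencils of curves at all: pass to an equivalent geodesic metric, set $f(x) := d(x,x_{0})$, observe that $\Lip(f,x) \equiv 1$, and note that the hypothesis $\calH^{1}(S \cap \gamma) = 0$ for all rectifiable $\gamma$ makes $g := \mathbf{1}_{S^{c}}$ an honest upper gradient of $f$. Feeding this into the Poincar\'e inequality and combining with Keith's pointwise estimate $\Lip(f,x) \lesssim \limsup_{r \to 0} r^{-1}\fint_{B(x,r)}|f - f_{B(x,r)}|\,d\mu$ yields $1 \lesssim \limsup_{r \to 0}\bigl(\mu(B(x,r) \setminus S)/\mu(B(x,r))\bigr)^{1/Q}$ for $\mu$-a.e.\ $x$; Lebesgue differentiation then forces $\mu(S) = 0$. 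No curve family is ever built.

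Your approach has a real gap as written. The transport inequality you display,
\[
\int \int_{\gamma} g \, d\calH^{1} \, d\pi(\gamma) \leq Cr \fint_{B(x_{0},Cr)} g \, d\mu,
\]
is the Semmes pencil that \emph{characterises} the $(1,1)$-Poincar\'e inequality; it does not follow from $(1,Q)$-Poincar\'e for $Q > 1$, and would in fact imply the stronger $(1,1)$ inequality. What one can hope for under $(1,Q)$-Poincar\'e is the corresponding $L^{Q}$ bound with $(\fint g^{Q}\,d\mu)^{1/Q}$ on the right. For your application this is harmless, since you only test against $g = \mathbf{1}_{E^{c}}$, so $g^{Q} = g$ and you still get $\calH^{1}(\gamma \setminus E) \lesssim r\eta_{0}^{1/Q}$ for a generic curve. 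But you should not claim the $L^{1}$ pencil, and even the $L^{Q}$ version under doubling (without $Q$-regularity) requires more than a ``template''; you are essentially invoking the modulus/pencil characterisation of $p$-Poincar\'e spaces, which is a theorem, not a construction one can sketch in a line.

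It is worth noting what each approach buys. The paper's argument is elementary but, as the paper itself remarks immediately after the proof, ``appears to be useless'' once one asks what happens when $\calH^{1}(S \cap \gamma)$ is merely small rather than zero: $\mathbf{1}_{S^{c}}$ is no longer an upper gradient. Your strategy is already quantitative in spirit and, if executed correctly, would come close to showing that doubling $(1,Q)$-Poincar\'e spaces are $A_{\infty}$ on curves --- which the paper establishes only under the stronger $Q$-bounded geometry assumption (Proposition~\ref{AinftyPoincare}). So your approach is overkill for Proposition~\ref{amuseBouche}, but points in the direction the paper actually needs for its main results.
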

\begin{proof} By \cite[Corollary 8.3.16]{MR3363168}, the space $X$ admits a geodesic metric equivalent to $d$; hence we may assume that $(X,d)$ is geodesic to begin with. Fix $x_{0} \in X$, and consider the $1$-Lipschitz function $f(x) := d(x,x_{0})$. Then
\begin{displaymath} \mathrm{Lip}(f,x) := \mathop{\limsup_{y \to x}}_{y \neq x} \frac{|f(x) - f(y)|}{d(x,y)} = 1, \qquad x \in X. \end{displaymath}  
Indeed, take a curve $\gamma$ connecting $x$ to $x_{0}$ with $\calH^{1}(\gamma) = d(x,x_{0})$. If $y \in \gamma$, then 
\begin{displaymath} d(x,x_{0}) = \calH^{1}(\gamma) = \calH^{1}(\gamma\llcorner_{[x,y]}) + \calH^{1}(\gamma\llcorner_{[y,x_{0}]}) \geq d(x,y) + d(y,x_{0}), \end{displaymath} 
which implies that $f(x) - f(y) = d(x,y)$. Now, if $S \subset X$ is a Borel set which intersects all rectifiable continua in zero length, then evidently $g := \mathbf{1}_{S^{c}}$ is an upper gradient for $f$. Moreover, by \cite[Proposition 4.3.3]{MR2041901}, 
\begin{displaymath}  \Lip(f,x) \lesssim \limsup_{r \to 0} \frac{1}{r} \fint_{B(x,r)} |f - f_{B(x,r)}| \, d\mu \end{displaymath} 
for $\mu$ almost every $x \in X$. Hence, using the Poincar\'e inequality,
\begin{align} 1 = \Lip(f,x) & \lesssim \limsup_{r \to 0} \left( \fint_{B(x,\lambda r)} g^{Q} \, d\mu \right)^{1/Q} \notag \\
&\label{form26} = \limsup_{r \to 0} \left( \frac{\mu(B(x, r) \cap S^{c})}{\mu(B(x, r))} \right)^{1/Q} \end{align} 
for $\mu$ almost every $x \in X$. In particular, this holds for $\mu$ almost every $x \in S$. However, by Lebesgue differentiation, see \cite[Section 3.4]{MR3363168}, the quantity in \eqref{form26} equals zero for $\mu$ almost every $x \in S$. We conclude that $\mu(S) = 0$, as desired. \end{proof}

To answer Question \ref{mainQ}, we will need to understand what happens if $\calH^{1}(S \cap \gamma)$ is not zero, but only "small" for rectifiable continua $\gamma \subset X$. The argument above appears to be useless in this situation (at least $\mathbf{1}_{S^{c}}$ no longer seems to be an upper gradient for anything), so we need to find a different approach. We begin with a definition. 

\begin{definition}[$A_{\infty}(\Gamma)$-space]\label{Ainfty} Let $(X,d,\mu)$ be a metric measure space. We say that $X$ is \emph{$A_{\infty}$ on curves}, or that $X$ is an \emph{$A_{\infty}(\Gamma)$-space}, if there are constants  $\tau < 1$ and $\theta > 0$ with the following property. Whenever $x \in X$, $0 < r \leq \diam(X)$, and $S \subset B(x,r)$ is a Borel set with $\mu(S) > \tau \mu(B(x,r))$, there exists a continuum $\gamma \subset B(x,r)$ such that
\begin{displaymath} \calH^{1}(\gamma) \leq r \quad \text{and} \quad \calH^{1}_{\infty}(\gamma \cap S) \geq \theta r. \end{displaymath}  
\end{definition}

This quantification is weak: it only says that if a subset of an $r$-ball has sufficiently short intersections with all continua of length $\leq r$, then the subset cannot occupy all of the ball. On a positive note, this property follows easily from the $Q$-Loewner condition:
\begin{proposition}\label{Qinfty} Every space of $Q$-bounded geometry, $Q > 1$, is $A_{\infty}$ on curves. \end{proposition}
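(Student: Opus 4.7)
The plan is to argue by contradiction using the $(1,Q)$-Poincar\'e inequality, adapting the test-function strategy of Proposition \ref{amuseBouche} to the quantitative setting. After passing to an equivalent geodesic metric via \cite[Corollary 8.3.16]{MR3363168}, fix $B := B(x,r)$ and a Borel set $S \subset B$ with $\mu(S) > \tau\mu(B)$, and suppose for contradiction that $\calH^{1}_{\infty}(\gamma \cap S) < \theta r$ for every continuum $\gamma \subset B$ with $\calH^{1}(\gamma) \leq r$. The natural test function is the ``$S^{c}$-content distance from $x$'':
\[
u(y) := \inf\bigl\{ \calH^{1}_{\infty}(\gamma \cap S^{c}) : \gamma \text{ continuum from } x \text{ to } y,\ \calH^{1}(\gamma) \leq r \bigr\}, \qquad y \in B(x, r/4).
\]

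Every admissible $\gamma$ satisfies $\diam(\gamma) \leq \calH^{1}(\gamma) \leq r$ and contains $x$, so $\gamma \subset \overline{B}$ and the hypothesis applies: $\calH^{1}_{\infty}(\gamma \cap S) < \theta r$. Combined with $\calH^{1}_{\infty}(\gamma) \geq d(x,y)$ (the standard content estimate for a continuum joining $x$ and $y$, via the $1$-Lipschitz map $z \mapsto d(x,z)$) and subadditivity of $\calH^{1}_{\infty}$, this gives $u(y) \geq d(x,y) - \theta r$; the geodesic from $x$ to $y$ yields the matching upper bound $u(y) \leq d(x,y)$. For $a,b \in B(x,r/4)$, concatenating the geodesic $\gamma_{xa}$ with a geodesic $\gamma_{ab}$ produces a length-$\leq 3r/4$ competitor for $u(b)$, which yields
\[
u(b) \leq d(x,a) + \int_{\gamma_{ab}} \mathbf{1}_{B \setminus S}\, d\calH^{1} \leq u(a) + \theta r + \int_{\gamma_{ab}} \mathbf{1}_{B \setminus S}\, d\calH^{1},
\]
so $g := \mathbf{1}_{B \setminus S}$ is an upper gradient of $u$ on $B(x,r/4)$ up to the additive error $\theta r$.

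Inserting this perturbed upper gradient into \eqref{poincare} at scale $r/(2\lambda)$, and using $Q$-regularity (which ensures $\mu(B(x,r/2) \setminus S) \leq (1-\tau)\mu(B) \lesssim (1-\tau)\mu(B(x,r/2))$), I obtain $\fint |u - u_{B(x, r/(2\lambda))}|\, d\mu \lesssim r(1-\tau)^{1/Q} + \theta r$. Conversely, $u(x) = 0$ together with the lower bound $u(y) \geq d(x,y) - \theta r$ and the fact that $Q > 1$ (so that an annulus inside $B(x,r/4)$ carries a definite $\mu$-proportion) forces the left-hand side to exceed $c r$ once $\theta$ is small enough. Combining yields $1 \lesssim (1-\tau)^{1/Q} + \theta$, which is incompatible with first fixing $\theta$ small (depending on $Q$ and the doubling/Poincar\'e constants) and then choosing $\tau$ close to $1$. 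The main obstacle is the upper-gradient step: a near-optimal $\gamma_{xa}$ for $u(a)$ can already have length close to $r$, blocking a direct concatenation; this is circumvented by using the geodesic as competitor, at the cost of an additive $\theta r$ (supplied by the two-sided bound on $u$) that is absorbed at the end of the argument.
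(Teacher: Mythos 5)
Your strategy is genuinely different from the paper's: the paper does not use the Poincar\'e inequality directly, but rather the $Q$-Loewner/modulus machinery of Heinonen--Koskela (two well-separated continua $E,F\subset B(x,r/C)$, the lower modulus bound $\mod_{Q}(E,F;B(x,r))\geq\epsilon$ restricted to curves of length $\leq Ar$, and the observation that $c\,\mathbf{1}_{B(x,r)\cap S^{c}}/r$ is admissible for that family). Unfortunately, your route as written has a genuine gap at the step you yourself flag as the crux. The function $g=\mathbf{1}_{B\setminus S}$ is \emph{not} an upper gradient of $u$ on $B(x,r/4)$, not even ``up to the additive error $\theta r$'': the upper-gradient inequality must hold along \emph{every} rectifiable curve joining $a$ to $b$, whereas your concatenation argument only produces it along curves $\gamma'$ with $d(x,a)+\calH^{1}(\gamma')\leq r$ and $\gamma'\subset B$. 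If $\gamma'$ is a curve from $a$ to $b$ of length greater than $r$ lying entirely in $S$, then $\int_{\gamma'}g\,d\calH^{1}=0$ while $|u(a)-u(b)|$ can be of order $r$ (take $u(a)=0$ and $u(b)\geq d(x,b)-\theta r$), so the inequality fails badly. The length cap $\calH^{1}(\gamma)\leq r$ in the definition of $u$ --- which you need for the lower bound $u(y)\geq d(x,y)-\theta r$, since the hypothesis only controls curves of length $\leq r$ inside $B$ --- is precisely what destroys the upper-gradient property for long competitors. In addition, \eqref{poincare} is stated for genuine upper gradients and has no provision for an additive error: $\theta r$ cannot be absorbed into $\int_{\gamma}\theta\,d\calH^{1}$ when $\gamma$ is short, so replacing $g$ by $g+\theta$ does not repair the defect, and the claimed conclusion $\fint|u-u_{B}|\,d\mu\lesssim r(1-\tau)^{1/Q}+\theta r$ is not a licensed application of the hypothesis.

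The estimate you \emph{did} prove --- that any continuum of length $\leq r$ crossing a definite distance inside $B$ must meet $S^{c}$ in $\calH^{1}_{\infty}$-content at least $d(x,y)-\theta r$ --- is exactly the right ingredient, but it should be fed into a \emph{modulus} lower bound rather than the Poincar\'e inequality: admissibility of a density $\rho$ for $\mod_{Q}$ of a fixed curve family only requires $\int_{\gamma}\rho\,d\calH^{1}\geq 1$ for the curves \emph{in that family}, with no quantifier over all curves and no additive error to manage. This is how the paper proceeds: one fixes two continua $E,F$ of diameter and mutual distance $\gtrsim\delta r$, notes that curves of length $\leq Ar$ joining them must spend length $\gtrsim\delta r$ in $S^{c}$ when $\theta$ is small, deduces $\int_{B}\mathbf{1}_{S^{c}}^{Q}\,d\mu\gtrsim r^{Q}$ from the Loewner bound, and concludes $\mu(B\setminus S)\gtrsim\mu(B)$. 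If you want to keep a PI-flavoured argument, you would need to redefine $u$ without the length cap (for instance as $\inf_{\gamma}\int_{\gamma}\rho\,d\calH^{1}$ over all curves, for a density $\rho$ penalising both $S^{c}$ and excursions outside $B$), verify a genuine upper-gradient inequality, and re-establish the lower bound $u\gtrsim d(x,\cdot)$; none of that is routine, and the modulus route avoids it entirely.
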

For a proof, see Section \ref{AinftyAndPoincare}. It would be interesting to know if weaker hypotheses (than $Q$-bounded geometry) imply the $A_{\infty}(\Gamma)$-property; in the present paper, the $Q$-bounded geometry assumption only appears in the proposition above, while the subsequent results just rely on the $A_{\infty}(\Gamma)$-condition combined with milder \emph{a priori} axioms (completeness, doubling, and quasiconvexity).

It turns out that the $A_{\infty}(\Gamma)$-property allows for a very substantial amount of self-improvement: it implies a positive answer to Question \ref{mainQ}. To make this precise, we consider the following notion of \emph{$1$-rectifiable representations}. These entities are also known as \emph{Alberti representations}, as they first appeared in the work of Alberti \cite{MR1215412} on the rank-$1$ property for BV functions. The definition below is a slight variant of the ones used in \cite{MR3300699,MR3494485,2019arXiv190400808B}. In this paper, a \emph{$1$-Frostman measure} is a Borel measure $\nu$ on some metric space $Y$ satisfying $\nu(B(x,r)) \leq r$ for all balls $B(x,r) \subset Y$. 
\begin{definition} Let $(Y,d)$ be a compact metric space, let $r > 0$, and let $\mathcal{P}_{r}$ be the set of Borel probability measures on the compact space (see Lemma \ref{compactnessLemma}) of measures of linear growth supported on continua $\gamma \subset Y$ of length at most $r$. Let $\nu_{\tn}$ be the measure
\begin{displaymath} \nu_{\tn} := \frac{1}{r} \int \nu \, d\tn(\nu). \end{displaymath} 
A Borel measure $\mu$ on $Y$ has a \emph{$1$-rectifiable representation of length $r$ in $L^{p}$} if there exists $\tn \in \mathcal{P}_{r}$ such that $\mu \ll \nu_{\tn}$ with $\mu \cong d\mu/d\nu_{\tn} \in L^{p}(\nu_{\tn})$. In this case, we write
\begin{displaymath} \|\mu\|_{L^{p}(r)} := \inf\{\|\mu\|_{L^{p}(\nu_{\tn})} : \tn \in \mathcal{P}\} < \infty. \end{displaymath} 
\end{definition}
Now we come to the main theorem of the paper.
\begin{thm}\label{main} Let $(X,d,\mu)$ be a complete, doubling, and quasiconvex space which is $A_{\infty}$ on curves. Then, there exist constants $p > 1$ and $A \geq 1$ such that the following holds. For any $x \in X$ and $0 < r \leq \diam(X)$, the normalised restriction 
\begin{displaymath} \mu_{x,r} := \frac{\mu\llcorner_{B(x,r)}}{\mu(B(x,r))} \end{displaymath}
has a $1$-rectifiable representation of length $r$ in $L^{p}$, and moreover $\|\mu_{x,r}\|_{L^{p}(r)} \leq A$. \end{thm}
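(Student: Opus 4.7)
The plan is to construct the probability measure $\tn$ in such a way that the pair $(\mu_{x,r}, \nu_{\tn})$ satisfies a quantitative $A_{\infty}$-comparison in the classical sense of weight theory, and then read off the $L^{p}$ density bound from a Gehring-type reverse Hölder inequality.

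As a preliminary, I would promote the Hausdorff-content conclusion in Definition \ref{Ainfty} to a statement about linear-growth Borel measures via Frostman's lemma. For every ball $B = B(y,\rho) \subset X$ and every Borel $S \subset B$ with $\mu(S) \geq \tau \mu(B)$, one obtains a continuum $\gamma \subset B$ with $\calH^{1}(\gamma) \leq \rho$ carrying a Borel measure $\sigma$ of linear growth (that is, $\sigma(B(z,s)) \leq s$ for all $z \in X$, $s > 0$) satisfying $\sigma(S) \gtrsim \theta \rho$. The normalised measures $\sigma/\rho$ are the \emph{atoms} out of which $\tn$ will be built.

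Fix $B_{0} = B(x_{0}, r_{0})$ and write $\mu_{0} := \mu_{x_{0},r_{0}}$. Using completeness and doubling, I would equip $B_{0}$ with a Christ-David dyadic system $\calD$ and construct $\tn \in \calP_{r_{0}}$ by a greedy/maximality procedure: iteratively add a new Frostman atom drawn from $(B',S)$ whenever $B' \subset B_{0}$ is a dyadic sub-ball and $S \subset B'$ witnesses a failure of the target $A_\infty$-comparison. Compactness of $\calP_{r_{0}}$ (Lemma \ref{compactnessLemma}) allows passage to a limiting $\tn$ for which, for every ball $B' \subset B_{0}$ of radius $\leq r_{0}$ and every Borel $S \subset B'$,
$$ \mu_{0}(S) \geq \tau \mu_{0}(B') \quad \Longrightarrow \quad \nu_{\tn}(S) \geq \tau' \nu_{\tn}(B'), $$
with $\tau' \in (0,1)$ depending only on $\tau, \theta$, and the doubling constant: were this to fail on some $(B',S)$, the preliminary step would produce a further Frostman atom supported on a continuum of length $\leq r_{0}$ inside $B'$ that could be absorbed into $\tn$ to boost $\nu_{\tn}(S)$, contradicting the termination of the procedure. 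The displayed implication is the classical quantitative $A_{\infty}$-relation for the pair $(\mu_{0}, \nu_{\tn})$, and Gehring's self-improvement lemma then yields a reverse Hölder inequality for $f := d\mu_{0}/d\nu_{\tn}$, namely $\|f\|_{L^{p}(\nu_{\tn})} \leq A$ for some $p > 1$ and $A \geq 1$ depending only on $\tau, \tau'$, and the doubling constant, which is exactly the conclusion.

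The main obstacle is the iterative construction of $\tn$. The atoms extracted from sub-balls of radii $\rho' \ll r_{0}$ must be weighted so that their cumulative contribution to the total variation of $\tn$ remains bounded (ensuring $\tn$ is a genuine probability measure), while still being quantitatively large enough to enforce the comparison at every scale; this is a Carleson-type packing estimate inside the dyadic system. A secondary difficulty is that $\nu_{\tn}$ need not be doubling on $X$, so the classical Gehring lemma must be adapted — likely by averaging with respect to the doubling measure $\mu_{0}$ rather than $\nu_{\tn}$ — in order to yield the reverse Hölder inequality in the required form. Ensuring that all constants depend only on the doubling and $A_{\infty}(\Gamma)$-parameters, uniformly in $B_{0}$ and in the underlying space, is the principal technical requirement.
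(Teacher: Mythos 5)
Your strategy --- build $\tn$ once and for all so that the pair $(\mu_{x,r},\nu_{\tn})$ satisfies a single-threshold $A_{\infty}$ comparison on all sub-balls, and then invoke Gehring --- is not the paper's route, and as written it has two genuine gaps.

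First, the greedy construction of $\tn$ is the entire difficulty, and you have described it rather than carried it out. When the comparison fails on a pair $(B',S)$ with $B'$ of radius $\rho' \ll r$, the Frostman atom you can extract has mass $\approx \theta\rho'$, whereas to restore $\nu_{\tn}(S) \geq \tau'\nu_{\tn}(B')$ you must inject into $S$ an amount of mass comparable to $\nu_{\tn}(B')$; the required weight on the new atom is therefore $\approx \nu_{\tn}(B')\,r/(\theta\rho')$, a quantity depending on the very measure being constructed, and each added atom can create new bad pairs elsewhere. Neither termination of the iteration nor the Carleson packing bound keeping $\tn$ a probability measure is established, and no monotone quantity drives the process. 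Note also that the $A_{\infty}(\Gamma)$ hypothesis only speaks about sets with $\mu(S)\geq\tau\mu(B')$; to reach $p>1$ one ultimately needs a power-law lower bound on $\nu(S\cap\gamma)$ valid for \emph{all} ratios $\mu(S)/\mu(B')$, and this self-improvement (the content of Proposition \ref{mainProp}, proved by a stopping-time induction on the level $n$ for which $\rho^{n}\mu(Q)<\mu(S)\leq\rho^{n-1}\mu(Q)$) does not come for free from the threshold statement.

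Second, the Gehring step fails as stated. The implication ``single-threshold $A_{\infty}$ comparison $\Rightarrow$ reverse H\"older inequality for $d\mu_{x,r}/d\nu_{\tn}$'' is a theorem about pairs of \emph{doubling} measures: its proof runs a Calder\'on--Zygmund stopping time with respect to the reference measure. Here $\nu_{\tn}$ is carried by a countable union of rectifiable curves and is badly non-doubling on $X$ (in a $Q$-regular space with $Q>1$ its support is $\mu$-null), so the lemma does not apply, and ``averaging with respect to $\mu_{x,r}$ instead'' does not yield the required $L^{p}(\nu_{\tn})$ bound. The paper circumvents both problems by never constructing $\tn$ explicitly: Corollary \ref{mainCor} produces, for each individual test function $\psi$, a single curve and Frostman measure $\nu$ with $\|\psi\|_{L^{1}(\mu_{x,r})}\lesssim\|\psi\|_{L^{q}(\nu/r)}$, and the Hahn--Banach separation argument of Proposition \ref{criterion}, using that $\mathcal{N}_{p}(r)$ is convex, balanced and weak* compact, upgrades ``for every $\psi$ there is a good $\nu$'' to ``there is one $\tn$ good for every $\psi$''. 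To salvage your constructive approach you would need both the packing estimate and a non-doubling substitute for Gehring; the duality route avoids both.
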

The main message here is that $p > 1$, because the theorem then implies the following answer to Question \ref{mainQ}:
\begin{cor}\label{mainCorIntro} Same assumptions and notation as in Theorem \ref{main}. Let $p^{\ast} := p/(p - 1) < \infty$ be the dual exponent of the parameter $p > 1$ found in Theorem \ref{main}. Let $x \in X$, $0 < r \leq \diam(X)$, and $B := B(x,r)$. Then, for any bounded Borel function $g \colon X \to \R$, we have
\begin{equation}\label{inverseLoewner} \frac{1}{\mu(B)} \int_{B} |g| \, d\mu \lesssim \sup_{\gamma} \left(\frac{1}{r} \int_{\gamma} |g|^{p^{\ast}} \, d\calH^{1} \right)^{1/p^{\ast}}, \end{equation} 
where the $\sup$ runs over continua of length $\leq r$ contained in $\overline{2B}$.
\end{cor}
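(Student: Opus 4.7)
My plan is to deduce the corollary from Theorem \ref{main} in a few short steps: apply the theorem to obtain a rectifiable representation of $\mu_{x,r}$, use H\"older's inequality on the resulting Radon--Nikodym factorisation, and then bound each per-curve integral $\int|g|^{p^{\ast}}\, d\nu$ by the corresponding integral $\int_{\gamma_{\nu}}|g|^{p^{\ast}}\, d\calH^{1}$ using that $\nu$ is $1$-Frostman.

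In detail, by Theorem \ref{main} I would write $\mu_{x,r} = f\, d\nu_{\tn}$ with $\nu_{\tn} = \tfrac{1}{r}\int \nu\, d\tn(\nu)$ and $\|f\|_{L^{p}(\nu_{\tn})} \leq A$, where $\tn$-a.e.\ $\nu$ is $1$-Frostman and supported on a continuum $\gamma_{\nu}$ of length $\leq r$. Noting that $f \equiv 0$ off $B$, H\"older's inequality yields
\begin{displaymath}
\frac{1}{\mu(B)}\int_{B}|g|\, d\mu = \int |g|\mathbf{1}_{B}\, f\, d\nu_{\tn} \leq A \left(\int_{B}|g|^{p^{\ast}}\, d\nu_{\tn}\right)^{1/p^{\ast}}.
\end{displaymath}
Unpacking $\nu_{\tn}$, it then suffices to bound $\tfrac{1}{r}\int_{B}|g|^{p^{\ast}}\, d\nu$, for $\tn$-a.e.\ $\nu$, by a supremum of the form $\tfrac{1}{r}\int_{\gamma}|g|^{p^{\ast}}\, d\calH^{1}$ over continua $\gamma$ of length $\leq r$ in $\overline{2B}$.

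The key step -- elementary, but the only non-bookkeeping point -- is the pointwise comparison $\nu(S) \leq \calH^{1}_{\infty}(S \cap \gamma_{\nu}) \leq \calH^{1}(S \cap \gamma_{\nu})$ for Borel sets $S$, which follows since any cover of $S$ by balls $B(x_{i},r_{i})$ satisfies $\nu(S) \leq \sum_{i} r_{i}$ by $1$-Frostmanness. Combined with the layer cake formula, this gives $\int_{B}|g|^{p^{\ast}}\, d\nu \leq \int_{B \cap \gamma_{\nu}}|g|^{p^{\ast}}\, d\calH^{1}$. If $\gamma_{\nu}$ intersects $B$, then $\diam(\gamma_{\nu}) \leq \calH^{1}(\gamma_{\nu}) \leq r$ forces $\gamma_{\nu} \subset \overline{2B}$, making $\gamma_{\nu}$ admissible for the sup; otherwise the left-hand integral already vanishes, and the bound is trivial.

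I anticipate no real obstacle, since the substantive content $p > 1$ is exactly what Theorem \ref{main} supplies: once $p > 1$ is available, the exponent $p^{\ast}$ is finite and the corollary is a formal consequence of H\"older's inequality and the Frostman--$\calH^{1}_{\infty}$ comparison.
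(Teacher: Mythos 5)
Your proof is correct. It is precisely the deduction from Theorem \ref{main} that the paper asserts to be ``easy'' but never writes out; what the paper actually proves is the closely related Corollary \ref{mainCor}, and it does so \emph{independently} of Theorem \ref{main} (via Proposition \ref{mainProp2} and a pigeonholing over the level sets of the function), because that statement is an ingredient of, rather than a consequence of, the main theorem. Compared with your route, Corollary \ref{mainCor} directly produces a single continuum $\gamma$ of length $\leq r$ and a $1$-Frostman measure $\nu$ on $\gamma$ with $\frac{1}{\mu(B)}\int_{B}|g|\,d\mu \leq C\bigl(\frac{1}{r}\int_{\gamma}|g|^{q}\,d\nu\bigr)^{1/q}$, where $q$ coincides with your $p^{\ast}$ (in the proof of Theorem \ref{main} one takes $1/p+1/q=1$); from there only your Frostman comparison $\nu \leq \calH^{1}\llcorner\gamma$ is needed, and the H\"older step and the averaging over $\tn$ disappear. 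Your version of that comparison, $\nu(S)\leq\calH^{1}_{\infty}(S\cap\gamma_{\nu})\leq\calH^{1}(S\cap\gamma_{\nu})$, is right (a set of diameter $d$ lies in a ball of radius $d+\epsilon$), as is the localisation $\gamma_{\nu}\subset\overline{2B}$ --- indeed the proof of Theorem \ref{main} already works in $Y=\overline{B(x,2r)}$. The one detail worth adding: the identity $\int\varphi\,d\nu_{\tn}=\frac{1}{r}\int\!\int\varphi\,d\nu\,d\tn(\nu)$ is set up in the paper only for $\varphi\in C(Y)$, so to evaluate $\int_{B}|g|^{p^{\ast}}\,d\nu_{\tn}$ by disintegration you should note the standard extension to bounded Borel $\varphi$ (the map $\nu\mapsto\nu(U)$ is lower semicontinuous for open $U$, then apply a monotone class argument).
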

In particular, by Theorem \ref{Qinfty}, spaces of $Q$-bounded geometry satisfy the "inverse Loewner property" \eqref{inverseLoewner} with some finite exponent. The proof of Theorem \ref{main} is given at the end Section \ref{s:representations}. It turns out that Corollary \ref{mainCorIntro} is more elementary than Theorem \ref{main} -- and is indeed a crucial step in the proof of Theorem \ref{main}! So, although Corollary \ref{mainCorIntro} easily follows from Theorem \ref{main}, we also need to give a separate proof for it, see Corollary \ref{mainCor}.

\subsection*{Acknowledgements} I would like to thank David Bate and Katrin F\"assler for useful discussions, and for pointing out references.

\section{Spaces of $Q$-bounded geometry are $A_{\infty}$ on curves}\label{AinftyAndPoincare}

This section contains the proof of Proposition \ref{Qinfty}. Before that, we record the well-known fact that rectifiable continua admit parametrisations by Lipschitz functions defined on compact intervals:
\begin{proposition}\label{parametrisation} Let $(\Gamma,d)$ be a rectifiable continuum. Then, there exists an absolute constant $C \geq 1$ and a surjective Lipschitz map $\gamma \colon [0,1] \to \Gamma$ with Lipschitz constant at most $C\calH^{1}_{d}(\Gamma)$. 
\end{proposition}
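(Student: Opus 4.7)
The plan is a classical approximation argument in the style of Eilenberg--Harrold: approximate $\Gamma$ by finite $\varepsilon$-nets, build spanning trees whose total weight is close to $\calH^{1}_{d}(\Gamma)$, traverse each tree by an Eulerian walk that doubles every edge, and extract an Arzel\`a--Ascoli limit. After rescaling, it suffices to construct a $1$-Lipschitz surjection $g \colon [0,L] \to \Gamma$ with $L \leq 2\calH^{1}_{d}(\Gamma)$; the proposition then follows by setting $\gamma(t) := g(Lt)$, giving an absolute constant $C \leq 2$.

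First, for each $\varepsilon > 0$ I would pick a maximal $\varepsilon$-separated subset $N_{\varepsilon} \subset \Gamma$; since $\Gamma$ is a continuum of finite length, $\card(N_{\varepsilon}) \lesssim \calH^{1}_{d}(\Gamma)/\varepsilon$. Form a weighted graph $G_{\varepsilon}$ on $N_{\varepsilon}$ by joining two net points by an edge of weight equal to their $d$-distance whenever they are at distance at most, say, $3\varepsilon$. The connectedness of $\Gamma$ forces $G_{\varepsilon}$ to be connected (two net points can be linked by an $\varepsilon$-chain in $\Gamma$, which after snapping to the net becomes a path in $G_{\varepsilon}$). Extract a minimum-weight spanning tree $T_{\varepsilon}$ of $G_{\varepsilon}$.

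The crucial estimate is
\[
\operatorname{length}(T_{\varepsilon}) \leq \calH^{1}_{d}(\Gamma) + o_{\varepsilon}(1),
\]
which I would prove by taking a $\delta$-efficient cover of $\Gamma$ witnessing the approximating premeasure $\calH^{1}_{\delta}(\Gamma)$, associating to each covering set the subset of $N_{\varepsilon}$ it meets, and joining these local clusters into a connected subgraph of $G_{\varepsilon}$ of total weight comparable to $\calH^{1}_{\delta}(\Gamma) + O(\varepsilon\, \card(N_{\varepsilon}))$; minimality of $T_{\varepsilon}$ then beats this bound. Granted the estimate, the multigraph obtained by doubling every edge of $T_{\varepsilon}$ is Eulerian, so it admits a closed walk of length exactly $2\operatorname{length}(T_{\varepsilon})$ visiting every vertex of $N_{\varepsilon}$. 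Parametrising this walk by arclength along its (straight-line) edges yields a $1$-Lipschitz map $g_{\varepsilon} \colon [0, 2\operatorname{length}(T_{\varepsilon})] \to \ell^{\infty}(\Gamma)$, whose image contains $N_{\varepsilon}$ and lies in an $O(\varepsilon)$-neighbourhood of $\Gamma$; a nearest-point projection returns it to $\Gamma$ at the cost of an $O(\varepsilon)$ error.

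Finally, rescaling each $g_{\varepsilon}$ to the unit interval $[0,1]$ yields a family of maps into the compact continuum $\Gamma$ with uniform Lipschitz constants $\leq 2\calH^{1}_{d}(\Gamma) + o(1)$. Arzel\`a--Ascoli furnishes a subsequential uniform limit $\gamma \colon [0,1] \to \Gamma$, which inherits the Lipschitz bound; its image is closed and contains $\bigcup_{\varepsilon} N_{\varepsilon}$, hence all of $\Gamma$ by density. The main obstacle is the length estimate on $T_{\varepsilon}$: this is the only step that genuinely exploits $\calH^{1}_{d}(\Gamma) < \infty$ (as opposed to mere compactness) and it pins down the sharp constant. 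The remaining pieces---Eulerian traversal, arclength parametrisation, and the compactness argument---are combinatorial or soft.
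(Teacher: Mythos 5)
The paper does not actually prove this proposition: it delegates to \cite[Lemma 3.7]{MR2373273} (constant $32$, Hilbert-space targets, extended to general metric spaces by Kuratowski embedding) and to \cite[Theorem 4.4]{MR3614660} (constant $2$). Your route --- nets, spanning trees, Eulerian doubling, Arzel\`a--Ascoli --- is precisely the classical Eilenberg--Harrold construction that those references carry out, so the strategy is the right one. But as a proof it has a genuine gap exactly where you locate the difficulty. The estimate $\operatorname{length}(T_{\varepsilon}) \leq \calH^{1}_{d}(\Gamma) + o_{\varepsilon}(1)$ is not established by your sketch. First, your own error term $O(\varepsilon \, \card(N_{\varepsilon}))$ is $O(\calH^{1}_{d}(\Gamma))$, not $o_{\varepsilon}(1)$, so even if the clustering argument worked it would only yield $\operatorname{length}(T_{\varepsilon}) \lesssim \calH^{1}_{d}(\Gamma)$ (which suffices for the proposition, but your claimed constant $C \leq 2$ does not follow from what you wrote). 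Second, and more seriously, ``joining the local clusters into a connected subgraph of $G_{\varepsilon}$'' is where all the work lives: the covering sets witnessing $\calH^{1}_{\delta}(\Gamma)$ have diameter $\leq \delta$ with $\delta$ a priori unrelated to $\varepsilon$, while $G_{\varepsilon}$ only contains edges of length $\leq 3\varepsilon$; a path in $G_{\varepsilon}$ between two net points lying in the same covering set has no length control coming from the diameter of that set. One has to choose $\delta \lesssim \varepsilon$, assign each net point to a single covering set, connect points within a cluster at cost $O(\delta)$ each, and connect adjacent clusters through the (connected) nerve of the cover --- each of these steps needs to be written down and costed. This is the one place where $\calH^{1}_{d}(\Gamma) < \infty$ is genuinely used, and it cannot be waved through.

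Two smaller points. The ``nearest-point projection'' back onto $\Gamma$ is not continuous (let alone Lipschitz) for a general compact $\Gamma \subset \ell^{\infty}(\Gamma)$, so composing with it would destroy the Lipschitz bound; but the step is unnecessary --- take the Arzel\`a--Ascoli limit of the maps $g_{\varepsilon}$ in $\ell^{\infty}(\Gamma)$ and note that, since $g_{\varepsilon}([0,1])$ lies in the $O(\varepsilon)$-neighbourhood of the closed set $\Gamma$, the limit map automatically takes values in $\Gamma$. Relatedly, Arzel\`a--Ascoli requires the images to lie in a common compact subset of $\ell^{\infty}(\Gamma)$; this is true (the union of $\Gamma$ with the images along a sequence $\varepsilon_{j} \to 0$ is totally bounded and complete) but deserves a sentence, as does surjectivity of the limit: its image need not literally contain $\bigcup_{\varepsilon} N_{\varepsilon}$, but it contains, for each $x \in \Gamma$, the limit of nearby net points, which gives $x$ by uniform convergence.
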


\begin{proof} The proposition is proven in \cite[Lemma 3.7]{MR2373273} with the constant $C = 32$ under the assumption that $\Gamma$ is contained in some Hilbert space $H$. It would, however, appear that the proof works without modification if $\Gamma$ is contained in a Banach space instead, and this can always be achieved by Kuratowski embedding. 

Another, more recent, reference is \cite[Theorem 4.4]{MR3614660} which also contains more precise information, and yields the statement with constant $C = 2$. \end{proof}

In the present paper, the lemma above is only needed to infer the following: if $\Gamma$ is a rectifiable continuum, $\nu$ is an outer measure supported on $\Gamma$, and $S \subset \Gamma$ is arbitrary, then there exists a sub-continuum $\Gamma' \subset \Gamma$ of length $\leq \mathcal{H}^{1}(\Gamma)/2$ with $\nu(S \cap \Gamma') \gtrsim \nu(S \cap \Gamma)$. This is clear once $\Gamma$ is parametrised as $\Gamma = \gamma([0,1])$, where $\gamma \colon [0,1] \to \Gamma$ is $C\calH^{1}(\Gamma)$-Lipschitz: then, by the pigeonhole principle, and the sub-additivity of $\nu$, the choice $\Gamma' := \gamma(I)$ will work for some interval $I \subset [0,1]$ of length $1/(2C)$.

We then prove proposition \ref{Qinfty}.

\begin{proposition}\label{AinftyPoincare} Let $(X,d,\mu)$ be a space of $Q$-bounded geometry for some $Q > 1$. Then $X$ is $A_{\infty}$ on curves.
\end{proposition}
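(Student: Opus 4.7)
The plan is to derive the $A_{\infty}(\Gamma)$-property from the Loewner inequality \eqref{loewner}, which holds in every space of $Q$-bounded geometry, via a test function argument for the $Q$-modulus. Fix $B = B(x,r)$ with $0 < r \leq \diam(X)$, and a Borel set $S \subset B$ with $\mu(S) > \tau \mu(B)$ for some $\tau < 1$ to be specified. I would begin by selecting two non-degenerate continua $E,F$ contained in a small concentric sub-ball of $B$, with diameters and separation both of order $r$; these exist because $X$ is path-connected and $r \leq \diam(X)$. Applying \eqref{loewner} together with a standard localisation in Loewner spaces produces a family $\Gamma$ of rectifiable continua $\gamma \subset B$ with $\calH^{1}(\gamma) \leq C_{0} r$ connecting $E$ and $F$, and satisfying $\mod_{Q}(\Gamma) \geq m_{0} > 0$ for constants $C_{0},m_{0}$ depending only on the data.

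Next I would test against $\rho := (\theta r)^{-1} \mathbf{1}_{B \setminus S}$, where $\theta > 0$ is a small constant to be fixed. The $Q$-regularity of $\mu$ gives
\[
\int \rho^{Q} \, d\mu \leq \frac{\mu(B \setminus S)}{(\theta r)^{Q}} \leq \frac{(1-\tau)\mu(B)}{(\theta r)^{Q}} \lesssim \frac{1-\tau}{\theta^{Q}}.
\]
Choosing $\tau$ close enough to $1$ (in terms of $\theta$) that the right-hand side is strictly smaller than $m_{0}$, the test function $\rho$ fails to be admissible for $\Gamma$, so some $\gamma \in \Gamma$ satisfies $\int_{\gamma} \rho \, d\calH^{1} < 1$, which rearranges to $\calH^{1}(\gamma \setminus S) < \theta r$.

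To convert this $\calH^{1}$-bound into the desired lower bound on $\calH^{1}_{\infty}(\gamma \cap S)$, I would combine two elementary facts. First, since $\gamma$ is connected and meets both $E$ and $F$, it has $\diam(\gamma) \geq \dist(E,F) \gtrsim r$; pushing any cover of $\gamma$ forward through the $1$-Lipschitz coordinate $d(\cdot,p)$ (for some $p \in E$) yields the connected-set bound $\calH^{1}_{\infty}(\gamma) \geq \diam(\gamma) \gtrsim r$. Second, subadditivity of $\calH^{1}_{\infty}$ together with the trivial bound $\calH^{1}_{\infty} \leq \calH^{1}$ gives
\[
\calH^{1}_{\infty}(\gamma \cap S) \geq \calH^{1}_{\infty}(\gamma) - \calH^{1}_{\infty}(\gamma \setminus S) \geq \calH^{1}_{\infty}(\gamma) - \calH^{1}(\gamma \setminus S) \gtrsim r
\]
provided $\theta$ is small enough. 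Since $\calH^{1}(\gamma) \leq C_{0}r$ may exceed $r$, I would finally apply the halving procedure described after Proposition \ref{parametrisation} with outer measure $\nu = \calH^{1}_{\infty}$ a bounded number of times ($O(\log C_{0})$), to pass to a sub-continuum $\gamma' \subset \gamma$ of length $\leq r$ still satisfying $\calH^{1}_{\infty}(\gamma' \cap S) \gtrsim r$, at the cost of a universal constant factor.

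The main obstacle is the localisation step alluded to in the first paragraph: the $Q$-modulus of the sub-family of Loewner curves which both remain inside $B$ and have length at most $C_{0} r$ must be bounded below. This is where $Q$-bounded geometry (rather than the bare Loewner condition) enters essentially, and the standard way to arrange it is a secondary modulus estimate that rules out curves leaving $B$ or having excessively large length, using the doubling and $Q$-regularity of $\mu$.
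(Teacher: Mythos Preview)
Your proposal is correct and follows essentially the same route as the paper: a localised Loewner modulus lower bound (continua $E,F$ in a small concentric ball, curves confined to $B$ with length $\lesssim r$ via the auxiliary $Q$-modulus estimate from $Q$-regularity), tested against $\rho \sim r^{-1}\mathbf{1}_{B\setminus S}$, followed by the subadditivity inequality $\calH^{1}_{\infty}(\gamma\cap S) \geq \calH^{1}_{\infty}(\gamma) - \calH^{1}(\gamma\setminus S)$. The only cosmetic difference is that the paper argues by contraposition (assume $\calH^{1}_{\infty}(\gamma\cap S)$ small for all $\gamma$, conclude $\rho$ is admissible and hence $\mu(S)$ small), which lets it avoid your final halving step; both arguments are equivalent.
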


\begin{proof} The proof of \cite[Lemma 3.17]{HK} shows that for every $\delta \in (0,1)$ there exist constants $C = C(\delta) \geq 1$ and $\epsilon = \epsilon(\delta) > 0$ such that if $E,F \subset B(x,r)$ are disjoint continua of diameter $\geq \delta r$, then $\mod_{Q}(E,F;B(x,Cr)) \geq \epsilon$. Moreover, by the $Q$-regularity and quasiconvexity of $X$, there exists $\delta > 0$ such that every ball $B(x,r)$ with $0 < r \leq \diam(X)$ contains two continua of diameter $\geq \delta r$ with pairwise distance $\geq \delta r$. Let $C,\epsilon > 0$ be the constants, as above, corresponding to this $\delta$.

Now, to prove the proposition, fix $x \in X$ and $0 < r \leq \diam(X)$. Choose two disjoint continua in $B(x,r/C)$ of diameter $\geq \delta r/C$ and
\begin{equation}\label{distEst} \dist(E,F) \geq \delta r/C. \end{equation}
Then
\begin{equation}\label{modulus1} \mod_{Q}(E,F;B(x,r)) \geq \epsilon, \end{equation}
Further, since
\begin{displaymath} \mod_{Q}(\{\gamma \subset B(x,r) : \calH^{1}(\gamma) \geq Ar\}) \lesssim A^{-Q}, \qquad A \geq 1, \end{displaymath}
see \cite[Lemma 3.15]{HK}, the modulus estimate \eqref{modulus1} remains essentially valid if we restrict to continua connecting $E$ to $F$ inside $B(x,r)$ of length $\leq Ar$:
\begin{equation}\label{modulus2} \mod_{Q}(E,F;B(x,r),\leq Ar) \geq \epsilon/2. \end{equation}
Now, to prove the $A_{\infty}(\Gamma)$-property of $X$, let $\theta > 0$ be a small constant, and let $S \subset B(x,r)$ be a Borel set satisfying $\calH^{1}_{\infty}(S \cap \gamma) \leq \theta r$ for all continua $\gamma \subset B(x,r)$ with $\calH^{1}(\gamma) \leq r$. It immediately follows that $\calH^{1}_{\infty}(S \cap \gamma) \lesssim A\theta r$ for all continua $\gamma \subset B(x,r)$ with $\calH^{1}(\gamma) \leq Ar$ (use Proposition \ref{parametrisation}). In particular 
\begin{equation}\label{form28} \calH_{\infty}^{1}(S \cap \gamma) \leq \frac{\dist(E,F)}{2} \end{equation}
for all such continua if $0 < \theta \ll \delta/(2AC)$, recalling \eqref{distEst}. Consequently, the function
\begin{displaymath} \rho := c \cdot \frac{\mathbf{1}_{B(x,r) \cap S^{c}}}{r}  \end{displaymath} 
is admissible for the family in \eqref{modulus2}, if $c \geq 2C/\delta$, recall \eqref{distEst}. Indeed, if $\gamma \subset B(x,r)$ meets both $E$ and $F$, then $\calH_{\infty}^{1}(B(x,r) \cap \gamma) \geq \dist(E,F)$, and consequently
\begin{displaymath} \calH^{1}(\gamma \cap [B(x,r) \cap S^{c}]) \geq \calH_{\infty}^{1}(\gamma \cap [B(x,r) \cap S^{c}]) \stackrel{\eqref{form28}}{\geq} \frac{\dist(E,F)}{2} \geq \frac{\delta r}{2C} \end{displaymath} 
by the sub-additivity of Hausdorff content. Now, we may infer from \eqref{modulus2} that
\begin{displaymath} \frac{\epsilon}{2} \leq \int \rho^{Q} \, d\mu = c^{Q} \cdot \frac{\mu(B(x,r) \cap S^{c})}{r^{Q}}, \end{displaymath} 
whence
\begin{displaymath} \mu(B(x,r) \cap S^{c}) \gtrsim r^{Q} \sim \mu(B(x,r)). \end{displaymath} 
This implies that $\mu(B(x,r) \cap S) \leq \tau \mu(B(x,r))$ for some $\tau < 1$ independent of $x$ and $r$. Consequently $X$ is $A_{\infty}$ on curves and the proof of the proposition is complete.  \end{proof}

\begin{lemma} Let $(X,d,\mu)$ be a complete quasiconvex metric measure space which is $A_{\infty}$ on curves. Then, there exist constants $\tau < 1$ and $A,\theta > 0$ with the following property. \end{lemma}

\section{Self-improvement of the $A_{\infty}(\Gamma)$-property}

In this section, $(X,d,\mu)$ is a complete and doubling space. It will be convenient to use a system of dyadic (Christ) cubes on $X$. This refers to a collection $\calD = \bigcup_{j \in \Z} \calD_{j}$ of open sets with the following properties:
\begin{itemize}
\item[(D1) \phantomsection \label{D1}] $\mu(X \setminus \cup \calD_{j}) = 0$ for all $j \in \Z$.
\item[(D2) \phantomsection \label{D2}] Each $Q \in \calD_{j}$ has \emph{side-length} $\ell(Q) := 2^{-j}$ and satisfies $\diam(Q) \leq \ell(Q)$.
\item[(D3) \phantomsection \label{D3}] Each $Q \in \calD$ contains a ball $B_{Q} \subset Q$ of radius $\ell(Q) \lesssim \mathrm{rad}(B_{Q}) \leq \ell(Q)$.
\item[(D4) \phantomsection \label{D4}] Each $Q \in \calD$ has a "small boundary" in the following sense: there are constants $C \geq 1$ and $\eta > 0$ (independent of $Q$) such that
\begin{displaymath} \mu(\{x \in Q : \dist(x,X \setminus Q) \leq \epsilon \ell(Q)\}) \leq C\epsilon^{\eta}\mu(Q), \qquad \epsilon > 0. \end{displaymath} 
\end{itemize}
Such a collection $\calD$ exists by \cite[Theorem 11]{Christ}. The next lemma says that the the $A_{\infty}(\Gamma)$-property can also be formulated in terms of dyadic cubes.

\begin{lemma}\label{prop1} Let $(X,d,\mu)$ be a complete and doubling $A_{\infty}(\Gamma)$-space. Then, there exist constants $\tau < 1$ and $\theta > 0$ such that the following holds. Let $Q \in \calD$ be a cube, and let $S \subset Q$ be a Borel set with $\mu(S) \geq \tau \mu(Q)$. Then, there exists a continuum $\gamma \subset Q$ of length $\calH^{1}(\gamma) \leq \ell(Q)$ such that $\calH^{1}_{\infty}(S \cap \gamma) \geq \theta \ell(Q)$.
\end{lemma}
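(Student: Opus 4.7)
The plan is to deduce the dyadic version from the ball version of the $A_\infty(\Gamma)$-property by applying Definition \ref{Ainfty} directly to the inscribed ball $B_Q \subset Q$ furnished by property (D3). Let $\tau_0 < 1$ and $\theta_0 > 0$ denote the constants appearing in Definition \ref{Ainfty}.

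The first step is to use doubling to compare $\mu(B_Q)$ with $\mu(Q)$. By (D2), $Q \subset B(x_Q, \ell(Q))$ for any $x_Q \in B_Q$, while by (D3), $\mathrm{rad}(B_Q) \gtrsim \ell(Q)$. The doubling property of $\mu$ should then supply a constant $c \in (0,1)$, depending only on the doubling constant of $\mu$ and on the implicit constant in (D3), for which
\begin{displaymath}
\mu(B_Q) \geq c\,\mu(Q) \qquad \text{for every } Q \in \calD.
\end{displaymath}

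Next, I would fix $\tau := 1 - c(1-\tau_0)/2 \in (\tau_0, 1)$, chosen so that $(1-\tau)/c < 1 - \tau_0$. For any Borel set $S \subset Q$ with $\mu(S) \geq \tau \mu(Q)$, the estimate
\begin{displaymath}
\mu(B_Q \setminus S) \leq \mu(Q \setminus S) \leq (1-\tau)\mu(Q) \leq \tfrac{1-\tau}{c}\mu(B_Q) < (1-\tau_0)\mu(B_Q)
\end{displaymath}
then forces $\mu(S \cap B_Q) > \tau_0\,\mu(B_Q)$. Writing $B_Q = B(x_Q, r_Q)$ with $r_Q \leq \ell(Q)$, I would invoke Definition \ref{Ainfty} on the ball $B_Q$ and the Borel set $S \cap B_Q \subset B_Q$ to produce a continuum $\gamma \subset B_Q \subset Q$ with $\calH^1(\gamma) \leq r_Q \leq \ell(Q)$ and
\begin{displaymath}
\calH^1_\infty(\gamma \cap S) \geq \calH^1_\infty(\gamma \cap (S \cap B_Q)) \geq \theta_0 r_Q \geq \theta\,\ell(Q),
\end{displaymath}
where $\theta > 0$ depends only on $\theta_0$ and on the ratio $\mathrm{rad}(B_Q)/\ell(Q) \gtrsim 1$ from (D3). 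This would yield exactly the desired conclusion.

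The only real (and very mild) technical annoyance is an edge case: Definition \ref{Ainfty} restricts to radii $0 < r \leq \diam(X)$, so the argument above applies directly whenever $r_Q \leq \diam(X)$, which covers all but possibly finitely many of the largest cubes. Such exceptional cubes can be handled by taking $B(x_Q, \diam(X)) = X$ in Definition \ref{Ainfty}, or simply by restricting attention to indices $j \in \Z$ with $2^{-j} \lesssim \diam(X)$, which is the only regime where (D1)--(D4) convey any nontrivial information. It is worth noting that the small-boundary condition (D4) plays no role in the present lemma; only the containment $B_Q \subset Q$ and the size $\mathrm{rad}(B_Q) \sim \ell(Q)$ from (D3) are used.
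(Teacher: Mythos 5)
Your proposal is correct and follows essentially the same route as the paper: both arguments inscribe the ball $B_Q$ from (D3) into $Q$, use doubling to get $\mu(B_Q)\gtrsim\mu(Q)$, push $\tau$ close enough to $1$ that $S\cap B_Q$ occupies more than the $\tau_0$-fraction of $B_Q$ required by Definition \ref{Ainfty}, and then transfer the resulting continuum back to $Q$ with constants degraded only by the ratio $\mathrm{rad}(B_Q)/\ell(Q)\gtrsim 1$. Your explicit choice of $\tau$ and your remark on the $r\leq\diam(X)$ edge case are fine refinements of what the paper leaves implicit.
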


\begin{proof} Recall from \nref{D3} that each $Q \in \calD$ contains a ball $B_{Q}$ of radius comparable to $\ell(Q)$; by taking a smaller ball inside $B_{Q}$ if needed, we will here assume that the radius of $B_{Q}$ is only a small fraction of $\ell(Q)$. If now $S \subset Q$ is a Borel set with $\mu(S) \geq \tau \mu(Q)$, then
\begin{displaymath} \mu(B_{Q} \, \setminus \, S) \leq \mu(Q \, \setminus \, S) \leq (1 - \tau)\mu(Q) \lesssim (1 - \tau)\mu(B_{Q}). \end{displaymath}
This implies that the ratio $\mu(S \cap B_{Q})/\mu(B_{Q})$ can be made arbitrarily close to one by taking $\tau$ close enough to one. Consequently, the $A_{\infty}(\Gamma)$-condition (for balls) will be applicable to the Borel subset $S \cap B_{Q}$ of the ball $B_{Q}$. The conclusion is that there is a continuum $\gamma \subset B_{Q} \subset Q$ of length $\lesssim \mathrm{diam}(B_{Q}) \leq \ell(Q)$ satisfying $\calH_{\infty}^{1}(S \cap \gamma) \geq \calH^{1}_{\infty}(S \cap B_{Q} \cap \gamma) \gtrsim \mathrm{rad}(B_{Q})$. This implies the statement. \end{proof}

The following proposition states that the $A_{\infty}(\Gamma)$-property implies a stronger version of itself. Whereas the $A_{\infty}(\Gamma)$-condition says that any subset of $B(x,r)$ with short intersection with every curve "cannot be all of $B(x,r)$", the proposition says, \emph{a fortiori}, that any such subset of $B(x,r)$ can only occupy a small fraction of $B(x,r)$.  
\begin{proposition}\label{mainProp} Let $(X,d,\mu)$ be a complete, doubling, and quasiconvex $A_{\infty}(\Gamma)$-space. Then, there exist constants $C \geq 1$ and $1 \leq q < \infty$ such that the following holds. Let $Q \in \calD$, and let $S \subset Q$ be a Borel set. Then, there exists a continuum $\gamma \subset X$ of length $\calH^{1}(\gamma) \leq \ell(Q)$ such that 
\begin{equation}\label{form23} \frac{\mu(S)}{\mu(Q)} \leq C\left(\frac{\calH_{\infty}^{1}(S \cap \gamma)}{\ell(Q)} \right)^{1/q}. \end{equation}
\end{proposition}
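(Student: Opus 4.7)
The plan is to combine a Calder\'on--Zygmund-style stopping time with a quasiconvex, doubling-controlled assembly. Fix $Q \in \calD$ and $S \subset Q$, and write $\alpha := \mu(S)/\mu(Q)$. Let $\mathcal{S}$ be the family of maximal dyadic subcubes $R \subseteq Q$ with $\mu(S \cap R)/\mu(R) > \tau$, where $\tau$ is the constant from Lemma \ref{prop1}. Dyadic Lebesgue differentiation for the doubling measure $\mu$ shows that $\mu$-almost every point of $S$ lies in some $R \in \mathcal{S}$, so the disjointness of $\mathcal{S}$ yields $\sum_{R \in \mathcal{S}} \mu(R) \geq \mu(S) = \alpha\mu(Q)$. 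For each $R \in \mathcal{S}$, Lemma \ref{prop1} supplies a continuum $\gamma_R \subset R$ with $\calH^{1}(\gamma_R) \leq \ell(R)$ and $\calH^{1}_{\infty}(\gamma_R \cap S) \geq \theta\ell(R)$.

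Stratifying $\mathcal{S}$ by side length, $\mathcal{S}_j := \{R \in \mathcal{S} : \ell(R) = 2^{-j}\ell(Q)\}$, I use property (D3) together with doubling to get the lower bound $\mu(R) \gtrsim 2^{-jd}\mu(Q)$ for $R \in \mathcal{S}_j$, where $d$ is the doubling exponent. Pigeonholing the inequality $\sum_j \sum_{R \in \mathcal{S}_j}\mu(R) \geq \alpha\mu(Q)$ in the scale $j$ then isolates a single scale $j^{*}$ at which the cubes of $\mathcal{S}_{j^{*}}$ carry a proportion $\gtrsim \alpha^{c_1}$ of the total mass, for some exponent $c_1 = c_1(d)$; by disjointness inside $Q$, their number satisfies $N \lesssim 2^{j^{*}d}$.

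The main obstacle is the assembly step: given a subcollection of $\mathcal{S}_{j^{*}}$, the continua $\gamma_{R}$ must be spliced into one continuum $\gamma \subset X$ of length $\leq \ell(Q)$ whose $\calH^{1}_{\infty}$-intersection with $S$ remains proportional to $\sum_{i} \ell(R_i)$. Quasiconvexity connects any two points of $Q$ by a curve of length $\lesssim \ell(Q)$, but summed over all pairs this gives a connector cost of order $N\ell(Q)$, which is too large. The remedy is a traveling-salesman-type (or hierarchical space-filling) construction that uses the doubling dimension quantitatively: $N$ points inside a ball of radius $\ell(Q)$ in a doubling metric space of dimension $d$ can be joined by a quasiconvex continuum of length $\lesssim \ell(Q) N^{1-1/d}$, built inductively from quasiconvex arcs at successively finer scales. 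Adding the internal lengths produces the total bound
\begin{displaymath}
\calH^{1}(\gamma) \lesssim \sum_i \ell(R_i) + \ell(Q) N^{1-1/d}.
\end{displaymath}

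Finally, I would optimise: requiring $\calH^{1}(\gamma) \leq \ell(Q)$ forces a constraint relating $N$ and $2^{-j^{*}}$, and maximising $\calH^{1}_{\infty}(\gamma \cap S) \gtrsim N \cdot 2^{-j^{*}}\ell(Q)$ under that constraint produces a bound of the form $\calH^{1}_{\infty}(\gamma \cap S) \gtrsim \alpha^{q}\ell(Q)$ with $q$ depending only on the doubling exponent and the quasiconvexity constant, which is exactly \eqref{form23} with $C = c^{-q}$. The delicate step governing the final exponent $q$ is the doubling-controlled assembly: it is the only place where the doubling dimension enters non-trivially, and it is what converts the qualitative $A_{\infty}(\Gamma)$-property into the quantitative, power-type improvement claimed by the proposition.
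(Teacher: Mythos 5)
Your stopping-time set-up (maximal dyadic cubes of $S$-density $>\tau$, Lemma \ref{prop1} applied in each) matches the paper's first step, but the single-scale reduction and the assembly both break down. First, Hausdorff content is subadditive, not superadditive: splicing $N$ continua $\gamma_{R_i}$, each with $\calH^{1}_{\infty}(\gamma_{R_i}\cap S)\geq\theta\,2^{-j^{*}}\ell(Q)$, does \emph{not} yield $\calH^{1}_{\infty}(\gamma\cap S)\gtrsim N2^{-j^{*}}\ell(Q)$. If the $N$ cubes lie in a ball of radius $t\ell(Q)$, a single covering set bounds the content of the whole union by $2t\ell(Q)$, which is far below $N2^{-j^{*}}\ell(Q)$ when the cubes are densely packed (e.g.\ $N\sim(t2^{j^{*}})^{d}$ with $d>1$). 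Second, the length budget defeats the selection: your worst-case bound $\ell(Q)N^{1-1/d}$ exceeds $\ell(Q)$ as soon as $N\gg1$, so under the constraint $\calH^{1}(\gamma)\leq\ell(Q)$ you can only afford boundedly many cubes at scale $j^{*}$ (or a small sub-ball's worth), giving content of order $2^{-j^{*}}\ell(Q)$ --- and $2^{-j^{*}}$ admits no lower bound in terms of $\alpha=\mu(S)/\mu(Q)$. Concretely, $S$ may consist of one piece of measure $0.9\cdot2^{-j^{*}d}\mu(Q)$ in each generation-$j^{*}$ subcube of $Q$: then $\alpha=0.9$ while the stopping scale $2^{-j^{*}}$ is arbitrarily fine, so no bound of the form $\alpha^{q}$ can come from finitely many scale-$j^{*}$ cubes. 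Relatedly, the pigeonhole over the unboundedly many scales $j$ only yields a proportion $\gtrsim\alpha/(1+j^{*})^{2}$ with $j^{*}$ uncontrolled, not $\alpha^{c_{1}}$.

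The paper circumvents both obstacles with an induction on the density level $n$ defined by $\rho^{n}\mu(Q)<\mu(S)\leq\rho^{n-1}\mu(Q)$, keeping all scales in play. The $\epsilon$-interiors of the (maximal) parents $\widehat P$ of the stopping cubes form a set $\widehat S$ with $\mu(\widehat S)\geq\rho^{n-1}\mu(Q)$, to which the inductive hypothesis applies and produces a curve $\widehat\gamma$ with $\calH^{1}_{\infty}(\widehat S\cap\widehat\gamma)\geq(c\theta)^{n-1}\ell(Q)$. One then attaches only one curve $\gamma_{P}$ per parent cube that $\widehat\gamma$ actually meets in its $\epsilon$-interior; since $\widehat\gamma$ then runs from $\mathrm{int}_{\epsilon}\,\widehat P$ to $X\setminus\widehat P$, it spends length $\geq\epsilon\ell(P)$ inside $\widehat P$, and the quasiconvex connectors of length $\lesssim\ell(P)$ can be charged to $\calH^{1}(\widehat\gamma\cap\widehat P)$; this is what keeps the total length $\lesssim\ell(Q)$ with no TSP-type loss. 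The content lower bound is then obtained by splitting an arbitrary cover of $\gamma\cap S$ into sets contained in some $\widehat P$ (charged to $\theta\ell(\widehat P)$ via $\gamma_{P}$) and the remaining sets (which, together with the uncovered cubes $\widehat P$, cover $\widehat S\cap\widehat\gamma$ and are therefore charged to the inductive content bound). Any repair of your argument would need substitutes for both of these mechanisms; the single-scale optimisation supplies neither.
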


\begin{proof} We will prove the following by induction on $n \geq 1$. Let $\theta > 0$ and $\tau < 1$ be the parameters from Lemma \ref{prop1}, let $\rho \in (\tau,1)$ (for example $\rho = (1 + \tau)/2$ will do), and let $c > 0$ be another small enough constant. If $Q \in \calD$ and $S \subset Q$ is a Borel set with 
\begin{equation}\label{form6} \rho^{n}\mu(Q) < \mu(S) \leq \rho^{n - 1}\mu(Q), \end{equation}
then there exists a continuum $\gamma \subset X$ satisfying
\begin{equation}\label{form15} \calH^{1}(\gamma) \leq \ell(Q) \quad \text{and} \quad \calH^{1}_{\infty}(S \cap \gamma) \geq (c\theta)^{n}\ell(Q), \end{equation}
where $c \in (0,1]$ is a suitable small constant, depending on the doubling constant of $X$, at least so small that $c\theta \leq \rho$. Let us briefly argue that this claim will prove the proposition. If $Q \in \calD$ and $S \subset Q$ is a Borel set with $\mu(S) > 0$, then there exists $n \geq 1$ such that \eqref{form6} holds. Then, by \eqref{form15}, there will exist $\gamma \subset X$ with $\calH^{1}(\gamma) \leq \ell(Q)$ and
\begin{displaymath} \frac{\calH^{1}_{\infty}(S \cap \gamma)}{\ell(Q)} \geq (c\theta)^{n} = \rho^{n \log_{\rho} (c\theta)} \gtrsim \left(\frac{\mu(S)}{\mu(Q)} \right)^{\log_{\rho} (c\theta)}. \end{displaymath} 
Here $1 \leq q := \log_{\rho} (c\theta) < \infty$, so the inequality above implies \eqref{form23}.

We then begin the induction. The case $n = 1$ follows immediately from Proposition \ref{prop1}, since $\rho \geq \tau$ and $c \leq 1$. So, we assume that $n \geq 2$, and the claim has already been established for index $n - 1$, and for all cubes $Q \in \calD$. Let then $Q \in \calD$, and $S \subset Q$ be a Borel set satisfying \eqref{form6}. Let $\calP$ be collection of maximal sub-cubes of $Q$ satisfying 
\begin{equation}\label{form8} \mu(P \cap S) \geq \tau \mu(P), \qquad P \in \calP. \end{equation} 
Note that $\mu$ almost all of $S$ is contained in the union of the cubes in $\calP$ by Lebesgue differentiation. If $Q \in \mathcal{P}$, then we are in the situation of Proposition \ref{prop1} and can certainly choose a continuum $\gamma \subset Q \subset X$ satisfying \eqref{form15}. So, we may assume that $Q \notin \mathcal{P}$. Therefore the parents $\widehat{P}$ of the cubes $P \in \calP$ are contained in $Q$, and satisfy
\begin{equation}\label{form7} \mu(\widehat{P} \cap S) < \tau \mu(\widehat{P}) \end{equation} 
by the maximality of the elements in $\mathcal{P}$. Let $\widehat{\calP}$ be the collection of maximal -- hence disjoint -- elements in $\{\widehat{P} : P \in \calP\}$. Thus each $\widehat{P} \in \widehat{\mathcal{P}}$ has at least one child in $\mathcal{P}$, and $\mu$ almost all of $S$ is still covered by $\cup \widehat{\mathcal{P}}$. Then, we have
\begin{equation}\label{form11} \sum_{\widehat{P} \in \widehat{\mathcal{P}}} \mu(\widehat{P}) \stackrel{\eqref{form7}}{>} \tau^{-1} \sum_{\widehat{P} \in \widehat{\calP}} \mu(\widehat{P} \cap S) \stackrel{\eqref{form6}}{\geq} \tau^{-1} \cdot \rho^{n} \mu(Q) = \left(\frac{\rho}{\tau}\right) \rho^{n - 1}\mu(Q). \end{equation} 
Since the cubes in $\calD$ have small boundary regions, recall \nref{D4} the "$\epsilon$-interior"
\begin{displaymath} \mathrm{int}_{\epsilon} \, \widehat{P} := \{x \in \widehat{P} : \dist(x,X \setminus \widehat{P}) \geq \epsilon \ell(\widehat{P})\} \end{displaymath} 
satisfies
\begin{displaymath} \mu(\mathrm{int}_{\epsilon} \, \widehat{P}) \geq \left(\frac{\tau}{\rho}\right)\mu(\widehat{P}), \qquad \widehat{P} \in \widehat{\mathcal{P}}, \end{displaymath} 
if $\epsilon > 0$ is chosen small enough, depending only on $\rho,\tau$, and the constants in \nref{D4}. Therefore, setting
\begin{equation}\label{form10} \widehat{S} := \bigcup_{\widehat{P} \in \widehat{\calP}} \mathrm{int}_{\epsilon} \, \widehat{P}, \end{equation}
it follows from \eqref{form11} that
\begin{displaymath} \mu(\widehat{S}) \geq \rho^{n - 1}\mu(Q). \end{displaymath} 
Consequently, by the inductive hypothesis, there exists a continuum $\widehat{\gamma} \subset X$ with $\calH^{1}(\widehat{\gamma}) \leq \ell(Q)$ and $\calH_{\infty}^{1}(\widehat{S} \cap \widehat{\gamma}) \geq (c\theta)^{n - 1} \ell(Q)$. At this point we discard -- without change in notation -- from $\widehat{\mathcal{P}}$ all the cubes with 
\begin{displaymath} \mathrm{int}_{\epsilon} \, \widehat{P} \cap \widehat{\gamma} = \emptyset. \end{displaymath}
For those cubes $\widehat{P}$ remaining, we pick an arbitrary point $c(\widehat{P}) \in \mathrm{int}_{\epsilon} \, \widehat{P} \cap \widehat{\gamma}$. We now dispose of the (special) case where $\card \widehat{\calP} = 1$, that is, $\widehat{\gamma}$ only meets the $\epsilon$-interior of a single cube $\widehat{P} \in \widehat{\mathcal{P}}$. Then $\widehat{P}$ is a cover for $\widehat{S} \cap \widehat{\gamma}$, hence
\begin{displaymath} \ell(\widehat{P}) \gtrsim \calH^{1}_{\infty}(\widehat{S} \cap \widehat{\gamma}) \geq (c\theta)^{n - 1}\ell(Q). \end{displaymath} 
Now, pick any cube $P \in \mathcal{P}$ whose parent is $\widehat{P}$. By \eqref{form8} and Proposition \ref{prop1}, there exists a curve $\gamma \subset P \subset X$ with $\calH^{1}(\gamma) \leq \ell(P) \leq \ell(Q)$ and 
\begin{displaymath} \calH^{1}_{\infty}(S \cap \gamma) \geq \theta \ell(P) \gtrsim \theta \ell(\widehat{P}) \geq c^{n - 1}\theta^{n}\ell(Q). \end{displaymath}
In particular, if $c > 0$ was small enough, $\calH^{1}_{\infty}(S \cap \gamma) \geq (c\theta)^{n}\ell(Q)$, and the proof of \eqref{form15} is complete in this case. From now on, we may then assume that $\widehat{S} \cap \widehat{\gamma}$ is not contained in any one cube $\widehat{P} \in \widehat{\mathcal{P}}$. Thus, for any $\widehat{P} \in \widehat{\mathcal{P}}$, the curve $\widehat{\gamma}$ meets both $c(\widehat{P}) \in \mathrm{int}_{\epsilon} \, \widehat{P}$ and $X \setminus \widehat{P}$. This implies that
\begin{equation}\label{form9} \calH^{1}(\widehat{\gamma} \cap \widehat{P}) \geq \epsilon\ell(P), \qquad \widehat{P} \in \widehat{\mathcal{P}}. \end{equation} 
We now construct the continuum $\gamma \subset Q$. For every $\widehat{P} \in \widehat{\calP}$, pick one cube $P \in \mathcal{P}$ whose parent is $\widehat{P}$. By \eqref{form8} and Proposition \ref{prop1}, for every such cube $P \in \calP$, there exists a curve $\gamma_{P} \subset P \subset X$ satisfying $\calH^{1}(\gamma_{P}) \leq \ell(P)$ and $\calH_{\infty}^{1}(\gamma_{P} \cap S \cap P) \geq \theta \ell(P)$. We further connect every $\gamma_{P}$ thus obtained to $c(\widehat{P}) \in \widehat{\gamma} \cap \widehat{P}$ by a continuum $\gamma_{P \to 0} \subset X$ of length $\calH^{1}(\gamma_{P \to 0}) \lesssim \ell(P)$ (using the quasiconvexity of $X$). Then, the union
\begin{displaymath} \gamma := \widehat{\gamma} \cup \bigcup_{\widehat{P} \in \widehat{\calP}} [\gamma_{P} \cup \gamma_{P \to 0}] \subset X \end{displaymath} 
is a continuum of length
\begin{align} \calH^{1}(\gamma) & \lesssim \calH^{1}(\widehat{\gamma}) + \sum_{\widehat{P} \in \widehat{\calP}} \ell(P)\\
&\label{form19} \stackrel{\eqref{form9}}{\lesssim} \calH^{1}(\widehat{\gamma}) + \sum_{\widehat{P} \in \widehat{\calP}} \calH^{1}(\widehat{\gamma} \cap \widehat{P}) \sim \calH^{1}(\widehat{\gamma}) \leq \ell(Q), \end{align}
recalling that the cubes in $\widehat{P}$ are disjoint.

We next estimate from below $\calH^{1}_{\infty}(\gamma \cap S)$. Let $\mathcal{R}$ be an arbitrary cover of $\gamma \cap S$ by disjoint cubes in $\calD$. There are two kinds of cubes in $\calR$: those which are contained in some cube of $\widehat{\mathcal{P}}$, and those which are not; we denote these families by $\mathcal{R}_{in}$ and $\mathcal{R}_{out}$. Further, $\mathcal{R}_{in}$ can be written as
\begin{displaymath} \mathcal{R}_{in} = \bigcup_{\widehat{P} \in \widehat{\mathcal{P}}} \mathcal{R}(\widehat{P}), \end{displaymath}
where $\calR(\widehat{P}) := \{R \in \calR : R \subset \widehat{P}\}$. If $\widehat{P} \in \widehat{\mathcal{P}}$ is not strictly contained in some cube of $\mathcal{R}$, then $\mathcal{R}(\widehat{P})$ is a cover of $\gamma \cap S \cap \widehat{P} \supset \gamma_{P} \cap S \cap P$, hence
\begin{displaymath} \sum_{R \in \mathcal{R}(\widehat{P})} \ell(R) \geq \calH^{1}_{\infty}(\gamma_{P} \cap S \cap P) \geq \theta \ell(P) \gtrsim \theta \ell(\widehat{P}). \end{displaymath}
Writing $\widehat{\mathcal{P}}_{0}$ for those cubes \textbf{not} strictly contained in any cube of $\mathcal{R}$, it follows that
\begin{equation}\label{form16} \sum_{R \in \mathcal{R}} \ell(R) \gtrsim \theta \left[ \sum_{R \in \mathcal{R}_{out}} \ell(R) + \sum_{\widehat{P} \in \widehat{\mathcal{P}}_{0}} \ell(\widehat{P}) \right]. \end{equation} 
To proceed, note that $\mathcal{R}_{out} \cup \widehat{\mathcal{P}}_{0}$ is a cover of $\widehat{S} \supset \widehat{S} \cap \widehat{\gamma}$, because $\widehat{\mathcal{P}}$ is a cover of $\widehat{S}$, and every cube in $\widehat{\mathcal{P}}$ is contained in some element of $\mathcal{R}_{out} \cup \widehat{\mathcal{P}}_{0}$. Consequently,
\begin{equation}\label{form17} \sum_{R \in \mathcal{R}_{out}} \ell(R) + \sum_{\widehat{P} \in \widehat{\mathcal{P}}_{0}} \ell(\widehat{P}) \gtrsim \calH^{1}_{\infty}(\widehat{S} \cap \widehat{\gamma}) \geq (c\theta)^{n - 1}\ell(Q). \end{equation}
Combining \eqref{form16}-\eqref{form17} shows that $\mathcal{H}^{1}_{\infty}(\gamma \cap B) \gtrsim c^{n - 1}\theta^{n}\ell(Q)$. The proof is now almost complete, except that the continuum $\gamma$ may be a constant times too long, compare the estimate \eqref{form19} with the goal in \eqref{form15}. However, by the subadditivity of $\calH^{1}_{\infty}$, we may choose (recall the discussion after Proposition \ref{parametrisation}) a continuum $\gamma' \subset \gamma$ which satisfies both
\begin{displaymath} \calH^{1}(\gamma') \leq \ell(Q) \quad \text{and} \quad \calH^{1}_{\infty}(\gamma' \cap B) \gtrsim c^{n - 1}\theta^{n}\ell(Q). \end{displaymath} 
Finally, if $c > 0$ was chosen small enough to begin with, we have $\calH^{1}_{\infty}(\gamma' \cap B) \geq (c\theta)^{n}\ell(Q)$, and the proof is complete. \end{proof}

For technical reasons related to the construction of $1$-rectifiable representations, in the next section, we record an alternative version of the previous proposition:
\begin{proposition}\label{mainProp2} Let $(X,d,\mu)$ be a complete, doubling, and quasiconvex $A_{\infty}(\Gamma)$-space. Then, there exist constants $C \geq 1$ and $1 \leq q < \infty$ such that the following holds. Let $Q \in \calD$, and let $S \subset Q$ be a compact set. Then, there exists a continuum $\gamma \subset X$ of length $\calH^{1}(\gamma) \leq \ell(Q)$ and a Borel measure $\nu$ supported on $S \cap \gamma$ satisfying $\nu(B(x,r)) \leq r$ for all balls $B(x,r) \subset X$, and
\begin{displaymath} \frac{\mu(S)}{\mu(Q)} \leq C\left( \frac{\nu(S \cap \gamma)}{\ell(Q)} \right)^{1/q}. \end{displaymath}
\end{proposition}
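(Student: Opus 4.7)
The plan is to deduce Proposition~\ref{mainProp2} from Proposition~\ref{mainProp} by applying a Frostman-type lemma: the lower bound on $\calH^1_{\infty}(S \cap \gamma)$ provided by the previous proposition can be upgraded to the existence of a non-trivial Borel measure of linear growth on $S \cap \gamma$ with total mass comparable to the content. This is exactly the standard passage from Hausdorff content to Frostman measures, valid in any complete doubling metric space (where, incidentally, the dyadic cubes of the previous section already give a constructive route).

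Concretely, first apply Proposition~\ref{mainProp} to the compact set $S \subset Q$ to obtain a continuum $\gamma \subset X$ with $\calH^{1}(\gamma) \leq \ell(Q)$ and
\begin{displaymath} \frac{\calH^{1}_{\infty}(S \cap \gamma)}{\ell(Q)} \gtrsim \left(\frac{\mu(S)}{\mu(Q)} \right)^{q}. \end{displaymath}
Since $\gamma$ is a continuum and $S$ is compact, the intersection $E := S \cap \gamma$ is compact.

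Next, invoke Frostman's lemma in the compact metric space $E$ (regarded as a subspace of the complete doubling space $X$): there exists a non-zero Borel measure $\nu$ supported on $E$ with $\nu(B(x,r)) \leq C_{0} r$ for every ball $B(x,r) \subset X$ and some absolute constant $C_{0} \geq 1$, and satisfying $\nu(E) \gtrsim \calH^{1}_{\infty}(E)$. (In a doubling space this is the classical construction via a Hahn--Banach / dyadic argument; one may alternatively run it directly on the Christ cubes of Section 3.) Replacing $\nu$ by $\nu/C_{0}$ yields the exact linear-growth normalisation $\nu(B(x,r)) \leq r$ at the cost of an additional constant factor in the bound $\nu(E) \gtrsim \calH^{1}_{\infty}(E)$.

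Combining the two steps gives a continuum $\gamma$ of length at most $\ell(Q)$ and a measure $\nu$ of linear growth supported on $S \cap \gamma$ with
\begin{displaymath} \frac{\nu(S \cap \gamma)}{\ell(Q)} \gtrsim \frac{\calH^{1}_{\infty}(S \cap \gamma)}{\ell(Q)} \gtrsim \left(\frac{\mu(S)}{\mu(Q)}\right)^{q}, \end{displaymath}
which rearranges to the statement of the proposition (with the same exponent $q$ and a larger constant $C$). The main obstacle is really just bookkeeping: one must verify that the Frostman construction in the metric setting produces a measure with the \emph{global} linear growth condition on all balls of $X$ (not merely on those of some scale), and then absorb the absolute constant from Frostman into the constant $C$ of the conclusion. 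Both are routine in the doubling setting.
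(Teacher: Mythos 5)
Your argument is correct and is essentially identical to the paper's own proof: apply Proposition \ref{mainProp} to get the continuum $\gamma$ with the content lower bound, then use Frostman's lemma (adapted to complete doubling metric spaces via the dyadic cubes) to replace $\calH^{1}_{\infty}(S\cap\gamma)$ by a $1$-Frostman measure of comparable mass, absorbing the normalisation constant into $C$.
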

\begin{proof} Combine the previous proposition with Frostman's lemma, see \cite[Theorem 8.8]{MR1333890}. The proof given in this reference works with negligible modifications in doubling and complete metric measure spaces (which in particular support a system of dyadic cubes), and shows that if $K \subset X$ is a compact set, then there exists a Borel measure $\nu$ supported on $K$ with $\nu(K) \gtrsim \calH^{1}_{\infty}(K)$, and satisfying $\nu(B(x,r)) \leq r$ for all balls $B(x,r) \subset X$. \end{proof}

Positive Borel measures $\nu$ satisfying $\nu(B(x,r)) \leq r$ for all balls $B(x,r) \subset X$ will be called \emph{$1$-Frostman measures} in the sequel. As the last result of this section, we upgrade the previous statement to cover all Borel functions (in contrast to just characteristic functions). This can be achieved at the cost of slightly increasing the exponent $q$. 

\begin{cor}\label{mainCor} Let $(X,d,\mu)$ be a complete, doubling, and quasiconvex $A_{\infty}(\Gamma)$-space. Then, there exist constants $C \geq 1$ and $1 \leq q < \infty$ such that the following holds. Let $x \in X$, $0 < r \leq \diam(X)$, and let $\psi \colon X \to \R$ be a bounded Borel function. Then, there exists a continuum $\gamma \subset X$ of length $\calH^{1}(\gamma) \leq r$, and a $1$-Frostman measure $\nu$ supported on $\gamma$ such that
\begin{equation}\label{form18} \frac{1}{\mu(B(x,r))} \int_{B(x,r)} |\psi| \, d\mu \leq C\left(\frac{1}{r} \int_{\gamma} |\psi|^{q} \, d\nu \right)^{1/q}.  \end{equation} 
\end{cor}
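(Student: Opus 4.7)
The plan is to reduce \eqref{form18} to Proposition \ref{mainProp2} via a layer-cake decomposition of $|\psi|$ combined with a pigeonhole-type contradiction argument. Replacing $\psi$ by $|\psi|$ and using the homogeneity of \eqref{form18} in $|\psi|$, we may assume $\psi \geq 0$ and $\fint_B \psi \, d\mu = 1$. Fix $q' := 2q$, with $q$ the exponent from Proposition \ref{mainProp2}. It suffices to produce \emph{one} pair $(\gamma, \nu)$ with $r^{-1}\int_\gamma \psi^{q'}\,d\nu \geq c$ for a constant $c > 0$ depending only on the structural constants of $X$, since then \eqref{form18} follows with $C := c^{-1/q'}$. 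I shall argue by contradiction: assume that \emph{every} such pair satisfies $r^{-1}\int_\gamma \psi^{q'}\,d\nu < c$, and reach an absurdity by shrinking $c$.

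To exploit the hypothesis on a single curve for all levels at once, set $S_k := \{\psi > 2^k\} \cap B$; the layer-cake formula yields $\int_B \psi\,d\mu \geq \tfrac12 \sum_{k \in \Z} 2^k \mu(S_k)$. Cover $B$ by a bounded collection $Q_1, \ldots, Q_N \in \calD$ of dyadic cubes of side $\ell(Q_i) \leq r$ with $\ell(Q_i) \sim r$; doubling yields $N \lesssim 1$ and $\mu(Q_i) \sim \mu(B)$. For each pair $(k,i)$ pick, by inner regularity, a compact set $S_k^i \subset S_k \cap Q_i$ with $\mu(S_k^i) \geq \tfrac12 \mu(S_k \cap Q_i)$, and apply Proposition \ref{mainProp2} to $S_k^i \subset Q_i$. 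This produces a continuum $\gamma_k^i \subset X$ of length $\leq \ell(Q_i) \leq r$ and a $1$-Frostman measure $\nu_k^i$ supported on $S_k^i \cap \gamma_k^i$ with
\[ \frac{\mu(S_k^i)}{\mu(Q_i)} \leq C_0 \left( \frac{\nu_k^i(S_k^i \cap \gamma_k^i)}{\ell(Q_i)} \right)^{1/q}. \]

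Apply the contradiction hypothesis to each $(\gamma_k^i, \nu_k^i)$: since $\psi > 2^k$ on $S_k^i$, Chebyshev gives $\nu_k^i(S_k^i \cap \gamma_k^i) \leq 2^{-kq'}\int_{\gamma_k^i}\psi^{q'}\,d\nu_k^i < c\, r\, 2^{-kq'}$. Substituting into Proposition \ref{mainProp2} and summing over $i$ yields the level-set bound
\[ \frac{\mu(S_k)}{\mu(B)} \leq A\, c^{1/q}\, 2^{-2k}, \]
for a constant $A \geq 1$ depending only on $N$, $C_0$, and the doubling constant. Combining with the trivial bound $\mu(S_k)/\mu(B) \leq 1$,
\[ 1 = \fint_B \psi\,d\mu \lesssim \sum_{k \in \Z} 2^k \min\{1,\, A\,c^{1/q}\, 2^{-2k}\}. \]
Splitting the sum at the index $K$ where the two entries of the minimum balance, each half is $\lesssim A^{1/2} c^{1/q'}$, so $1 \lesssim A^{1/2} c^{1/q'}$. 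This is violated once $c$ is small enough in terms of $A$, giving the contradiction; \eqref{form18} then holds with $q$ replaced by $q' = 2q$ and $C \sim A^{1/2}$.

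The main obstacle is exactly the passage from Proposition \ref{mainProp2}, which produces a curve depending on each level $2^k$, to a \emph{single} curve witnessing the inequality for all of $\psi$ simultaneously. This is what forces the indirect, contradiction-based route, together with a slight bump of the exponent from $q$ to $q' = 2q$ that makes the geometric tail $\sum_{k > K} 2^{k(1 - q'/q)}$ converge.
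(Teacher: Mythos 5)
Your argument is correct, and it rests on the same two pillars as the paper's proof---the dyadic level-set decomposition of $|\psi|$ and Proposition \ref{mainProp2} applied to compact subsets of level sets, with an exponent bump to absorb the loss---but you run it in the opposite logical direction. The paper argues directly: after normalising $\fint_{Q}|\psi|\,d\mu = 1$ on a single cube $Q$ with $\ell(Q) \sim r$, it pigeonholes \emph{one} level $j \geq -1$ with $\mu(\{|\psi| \geq 2^{j}\} \cap Q)/\mu(Q) \gtrsim 2^{-j}(2+j)^{-2}$, applies Proposition \ref{mainProp2} once to that level set, and checks that for any exponent $q$ strictly larger than the $q_{0}$ of the proposition, the exponential gain $2^{j(1 - q_{0}/q)}$ beats the polynomial loss $(2+j)^{-2q_{0}/q}$. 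You instead assume the conclusion fails with constant $c$ on every admissible pair $(\gamma,\nu)$, feed this into Proposition \ref{mainProp2} on \emph{every} level set (and every cube of a bounded cover of $B$), obtain the uniform decay $\mu(S_{k})/\mu(B) \lesssim c^{1/q}2^{-2k}$, and sum the layer cake to reach $1 \lesssim c^{1/q'}$. The two arguments are essentially contrapositives of one another: your version avoids choosing the pigeonhole weights explicitly, at the cost of the indirect setup, and commits to the specific exponent $q' = 2q$ where the paper allows any exponent above $q_{0}$. Two cosmetic points. First, the layer-cake inequality you actually need (and use at the end) is $\fint_{B}\psi\,d\mu \leq \sum_{k}2^{k}\mu(S_{k})/\mu(B)$, which is the reverse of the one you state at the outset; both are standard, so this is only a slip of the pen. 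Second, a fixed generation of cubes covers only $\mu$-almost all of $B$ by \nref{D1}, which of course suffices for the summation over $i$.
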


\begin{proof} For every $x \in X$, $0 < r \leq \diam(X)$, and bounded Borel function $\psi \colon X \to \R$, there exists a cube $Q \in \calD$ with $r \lesssim \ell(Q) \leq r$ and
\begin{displaymath} \frac{1}{\mu(B(x,r))} \int_{B(x,r)} |\psi| \, d\mu \lesssim \frac{1}{\mu(Q)} \int_{Q} |\psi| \, d\mu. \end{displaymath} 
Therefore, in place of \eqref{form18}, it suffices to find $\gamma \subset X$ of length $\calH^{1}(\gamma) \leq \ell(Q)$, and a $1$-Frostman measure $\nu$ supported on $\gamma$, such that
\begin{equation}\label{form18a} \frac{1}{\mu(Q)} \int_{Q} |\psi| \, d\mu \leq C\left(\frac{1}{\ell(Q)} \int |\psi|^{q} \, d\nu \right)^{1/q}. \end{equation}
In proving \eqref{form18a}, we may further assume that
\begin{displaymath} \delta := \frac{1}{\mu(Q)} \int_{Q} |\psi| \, d\mu = 1, \end{displaymath} 
since otherwise we may first prove the \eqref{form18a} for the function $\varphi := \psi/\delta$ instead, and note that both sides of \eqref{form18a} have the same homogeneity. Then, 
\begin{align*} 1 & = \frac{1}{\mu(Q)} \int_{0}^{\infty} \mu(\{x \in Q : |\psi| \geq \lambda\}) \, d\lambda\\
& = \sum_{j \in \Z} \int_{2^{j - 1}}^{2^{j}} \frac{\mu(\{x \in Q : |\psi| \geq \lambda\})}{\mu(Q)} \, d\lambda\\
& \leq \sum_{j \in \Z} 2^{j} \left( \frac{\mu(\{x \in Q : |\psi| \geq 2^{j}\})}{\mu(Q)} \right). \end{align*} 
Note that the sum over the integers $j \leq -2$ is at most $1/2$, so  
\begin{displaymath} \sum_{j \geq -1}  2^{j} \left( \frac{\mu(\{x \in Q : |\psi| \geq 2^{j}\})}{\mu(Q)} \right) \geq \frac{1}{2}. \end{displaymath} 
By the pigeonhole principle, it is therefore possible to choose $j \geq -1$ and a compact set $S \subset \{x \in Q : |\psi| \geq 2^{j}\}$ such that
\begin{displaymath} \frac{\mu(S)}{\mu(Q)} \gtrsim \frac{2^{-j}}{(2 + j)^{2}}, \end{displaymath} 
Then, let $C \geq 1$ and $q_{0} \geq 1$ be the parameters appearing in Proposition \ref{mainProp2}, and let $q > q_{0}$. The proposition states that there exists a continuum $\gamma \subset X$ of length $\calH^{1}(\gamma) \leq \ell(Q)$ and a $1$-Frostman measure $\nu$ supported on $S \cap \gamma$ satisfying
\begin{displaymath} \frac{\nu(S \cap \gamma)}{\ell(Q)} \geq  \left(\frac{1}{C}\frac{\mu(S)}{\mu(Q)} \right)^{q_{0}} \gtrsim \left(\frac{2^{-j}}{(2 + j)^{2}} \right)^{q_{0}}. \end{displaymath} 
Recalling that $|\psi| \geq 2^{j}$ on $S \supset \spt \nu$, we obtain
\begin{displaymath} \left(\frac{1}{\ell(Q)} \int_{\gamma} |\psi|^{q} \, d\nu \right)^{1/q} \gtrsim 2^{j} \left(\frac{2^{-j}}{(2 + j)^{2}} \right)^{q_{0}/q} \gtrsim 1 = \frac{1}{\mu(Q)} \int_{Q} |\psi| \, d\mu, \end{displaymath} 
which completes the proof of the corollary. \end{proof}

\section{Constructing $1$-rectifiable representations}\label{s:representations}

We move towards the proof of Theorem \ref{main}. The notation $(Y,d)$ will be reserved for a compact metric space, and $\mathcal{M}(Y)$ stands for the vector space of complex Borel measures on $Y$.

\begin{definition} Let $r > 0$, and let $(Y,d)$ be a compact metric space. We denote by $\mathcal{F}_{r}(Y)$ the set of $1$-Frostman measures supported on a continuum of length $\leq r$ (recall that a $1$-Frostman measure is a positive Borel measure $\nu$ satisfying $\nu(B(x,s)) \leq s$ for all (open) balls $B(x,s) \subset Y$). 
\end{definition}

\begin{lemma}\label{compactnessLemma} If $(Y,d)$ is compact, then $\mathcal{F}_{r}(Y)$ is compact in the weak* topology. \end{lemma}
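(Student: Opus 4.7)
The plan is to combine three standard compactness and semicontinuity ingredients: Banach--Alaoglu for positive Borel measures, Blaschke's selection theorem for closed subsets of a compact metric space under Hausdorff distance, and Go\l{a}b's lower semicontinuity of $\calH^{1}$ under Hausdorff convergence of continua.

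Given a sequence $\nu_{n} \in \mathcal{F}_{r}(Y)$, I would for each $n$ fix a continuum $\gamma_{n} \subset Y$ with $\calH^{1}(\gamma_{n}) \leq r$ and $\spt \nu_{n} \subset \gamma_{n}$. Plugging $s = \diam(Y)$ into the 1-Frostman inequality yields $\nu_{n}(Y) \leq \diam(Y)$, so the sequence is uniformly bounded in total variation. Banach--Alaoglu then produces a subsequence $\nu_{n_{k}}$ converging weakly* to some positive Borel measure $\nu$ on $Y$. Passing to a further subsequence, Blaschke's selection theorem supplies a compact set $\gamma \subset Y$ with $\gamma_{n_{k}} \to \gamma$ in Hausdorff distance. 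Since the Hausdorff limit of a sequence of connected compact sets is connected, $\gamma$ is itself a continuum, and Go\l{a}b's theorem yields
\[ \calH^{1}(\gamma) \leq \liminf_{k \to \infty} \calH^{1}(\gamma_{n_{k}}) \leq r. \]

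It remains to check $\nu \in \mathcal{F}_{r}(Y)$. For $\spt \nu \subset \gamma$: if $x \notin \gamma$ then some open ball $B(x,\delta)$ is disjoint from $\gamma$, hence from $\gamma_{n_{k}}$ for all sufficiently large $k$, so $\nu_{n_{k}}(B(x,\delta)) = 0$, and the weak* lower semicontinuity on open sets forces $\nu(B(x,\delta)) = 0$. The same lower semicontinuity, applied to an arbitrary open ball $B(x,s) \subset Y$, gives
\[ \nu(B(x,s)) \leq \liminf_{k \to \infty} \nu_{n_{k}}(B(x,s)) \leq s, \]
so $\nu$ is 1-Frostman.

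The only genuinely non-trivial input is the metric-space form of Go\l{a}b's theorem, i.e.\ lower semicontinuity of $\calH^{1}$ along Hausdorff-convergent sequences of continua in a compact metric space. This is known, and can in any case be derived via Lipschitz parametrisations as in Proposition \ref{parametrisation} followed by an Arzel\`a--Ascoli extraction; once it is in hand, the remaining steps reduce to Banach--Alaoglu and the Portmanteau inequality for open sets.
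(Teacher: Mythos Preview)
Your proof is correct and follows essentially the same approach as the paper's own proof: extract a weak* limit of the measures and a Hausdorff limit of the supporting continua, then invoke Go\l{a}b's theorem together with the Portmanteau lower semicontinuity on open balls to verify that the limit measure lies in $\mathcal{F}_{r}(Y)$. Your version is somewhat more detailed (you spell out the total-variation bound, Blaschke selection, and the support argument explicitly), but the structure and ingredients are identical.
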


\begin{proof} Let $\{\nu_{j}\}_{j \in \N} \subset \mathcal{F}_{r}(Y)$ be an arbitrary sequence, and associate to each $\nu_{j}$ a continuum $\gamma_{j} \subset Y$ of length $\calH^{1}(\gamma_{j}) \leq r$. Then, after passing to subsequences, we may assume that $\nu_{j} \rightharpoonup \nu \in \mathcal{M}(Y)$ (here "$\rightharpoonup$" stands for weak* convergence) and $\gamma_{j} \to \gamma$ in the Hausdorff metric. Then, $\gamma$ is also a continuum satisfying $\calH^{1}(\gamma) \leq r$ by Go\l ab's theorem, see \cite[Theorem 4.4.17]{MR2012736} or \cite[Theorem 2.9]{MR3614660}. Moreover, $\nu$ is clearly supported on $\gamma$, and $\nu(B(x,s)) \leq \liminf_{j \to \infty} \nu_{j}(B(x,s)) \leq s$ for all balls $B(x,s) \subset X$ by weak* convergence.  \end{proof}

We repeat the definition of $1$-rectifiable representations for the reader's convenience. 

\begin{definition} Let $(Y,d)$ be a compact metric space, let $r > 0$, and let $\mathcal{P}_{r} := \mathcal{P}(\mathcal{F}_{r}(Y))$ be the set of Borel probability measures on the compact space $\mathcal{F}_{r}(Y)$, defined above. Let $\nu_{\tn}$ be the measure
\begin{displaymath} \nu_{\tn} := \frac{1}{r} \int \nu \, d\tn(\nu) \in \mathcal{M}(Y). \end{displaymath} 
More formally, $\nu_{\tn}$ is the measure given by the Riesz representation theorem applied to the linear functional
\begin{displaymath} \varphi \mapsto \frac{1}{r} \int \left( \int \varphi \, d\nu \right) \, d\tn(\nu), \qquad \varphi \in C(Y). \end{displaymath}
The right hand side above is well-defined, because $\nu \mapsto \int \varphi \, d\nu$ is continuous in the weak* topology, and $\tn$ was assumed to be a Borel measure on $\mathcal{F}_{r}$. A measure $\mu \in \mathcal{M}(Y)$ has a \emph{$1$-rectifiable representation of length $r$ in $L^{p}$} if there exists $\tn \in \mathcal{P}_{r}$ such that $\mu \ll \nu_{\tn}$ with $\mu \in L^{p}(\nu_{\tn})$. In this case, we write
\begin{displaymath} \|\mu\|_{L^{p}(r)} := \inf\{\|\mu\|_{L^{p}(\nu_{\tn})} : \tn \in \mathcal{P}\} < \infty. \end{displaymath} 
\end{definition}

\begin{proposition} Let $1 < p \leq \infty$, let $(Y,d)$ be compact, and let $r > 0$. Then, the set
\begin{equation}\label{defN} \mathcal{N} := \mathcal{N}_{p}(r) := \left\{\mu \in \mathcal{M}(Y) : \|\mu\|_{L^{p}(r)} \leq 1 \right\} \end{equation}
is convex, balanced, and closed, hence compact in the weak* topology of $\mathcal{M}(Y)$. \end{proposition}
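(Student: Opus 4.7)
I would verify the three properties in order --- balanced, convex, closed --- and deduce compactness from closedness and a uniform total-variation bound via Banach--Alaoglu. The balanced property is immediate: if $\mu \in \mathcal{N}$ is witnessed by $\tn \in \calP_r$ and $|\lambda| \leq 1$, then $\lambda\mu$ is witnessed by the same $\tn$, with Radon--Nikodym density multiplied by $\lambda$, hence $L^p(\nu_\tn)$-norm scaled by $|\lambda| \leq 1$.

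For convexity, fix $\mu_1, \mu_2 \in \mathcal{N}$, $t \in (0,1)$, and $\epsilon > 0$. Pick $\tn_i \in \calP_r$ with $\|\mu_i\|_{L^p(\nu_{\tn_i})} \leq 1 + \epsilon$ and set $\tn := t\tn_1 + (1-t)\tn_2$, so that $\nu_\tn = t\nu_{\tn_1} + (1-t)\nu_{\tn_2}$. Then $\nu_{\tn_i} \ll \nu_\tn$ with densities $g_i := d\nu_{\tn_i}/d\nu_\tn$ satisfying $tg_1 + (1-t)g_2 = 1$ $\nu_\tn$-a.e. Writing $f_i := d\mu_i/d\nu_{\tn_i}$ and $\mu := t\mu_1 + (1-t)\mu_2$, the chain rule gives $d\mu/d\nu_\tn = tg_1 f_1 + (1-t)g_2 f_2$. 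Applying pointwise convexity of $|\cdot|^p$ with the discrete probability weights $tg_1, (1-t)g_2$ yields
\begin{displaymath}
\left|\frac{d\mu}{d\nu_\tn}\right|^p \leq tg_1 |f_1|^p + (1-t)g_2 |f_2|^p \quad \nu_\tn\text{-a.e.},
\end{displaymath}
and integrating (using $g_i\, d\nu_\tn = d\nu_{\tn_i}$) bounds $\|\mu\|_{L^p(\nu_\tn)}^p$ by $(1+\epsilon)^p$. Letting $\epsilon \downarrow 0$ gives $\|\mu\|_{L^p(r)} \leq 1$.

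For closedness I would first bound total variation: every $\nu \in \calF_r(Y)$ is supported on a continuum $\gamma$ with $\diam(\gamma) \leq \calH^1(\gamma) \leq r$, so the Frostman condition applied to balls $B(x_0, r+\delta)$, $x_0 \in \gamma$, gives $\nu(Y) \leq r$; hence $\nu_\tn(Y) \leq 1$ and, by H\"older, $|\mu|(Y) \leq 1$ for all $\mu \in \mathcal{N}$. So $\mathcal{N}$ sits inside the weak* compact, weak* metrisable unit ball of $\calM(Y) = C(Y)^*$, and it suffices to work with sequences. Suppose $\mu_n \in \mathcal{N}$ with $\mu_n \rightharpoonup \mu$, and pick $\tn_n$ realising $\|\mu_n\|_{L^p(\nu_{\tn_n})} \leq 1 + 1/n$. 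By Lemma \ref{compactnessLemma} and compactness of $\calP_r = \calP(\calF_r(Y))$, extract a subsequence with $\tn_n \rightharpoonup \tn \in \calP_r$; since $\nu \mapsto \int \varphi\, d\nu$ is weak*-continuous on $\calF_r(Y)$ for each $\varphi \in C(Y)$, this forces $\nu_{\tn_n} \rightharpoonup \nu_\tn$ in $\calM(Y)$, and in particular $\int |f|^{p^*} d\nu_{\tn_n} \to \int |f|^{p^*} d\nu_\tn$ for every $f \in C(Y)$. H\"older against $\mu_n$ then passes to the limit as $|\int f\, d\mu| \leq \|f\|_{L^{p^*}(\nu_\tn)}$ for all $f \in C(Y)$, and since $p > 1$ makes $p^* < \infty$, $C(Y)$ is dense in $L^{p^*}(\nu_\tn)$. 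The extended functional has norm $\leq 1$, so $L^p$--$L^{p^*}$ duality produces $g \in L^p(\nu_\tn)$ with $\|g\|_p \leq 1$ and $\mu = g\,\nu_\tn \in \mathcal{N}$.

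The hard part is closedness: the representing measures $\nu_{\tn_n}$ are in general mutually singular, so the densities $d\mu_n/d\nu_{\tn_n}$ cannot be compared directly. The trick is to avoid any pointwise convergence of densities and instead pass to the weak* limit at the level of the probabilities $\tn_n$ on $\calF_r(Y)$, recovering $d\mu/d\nu_\tn$ abstractly via $L^{p^*}$--$L^p$ duality after upgrading $\nu_{\tn_n} \rightharpoonup \nu_\tn$ to a test-function bound on $\mu$.
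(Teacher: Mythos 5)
Your proposal is correct. The closedness argument is essentially the paper's: extract a weak* limit $\tn$ of the representing probabilities $\tn_{n}$ using Lemma \ref{compactnessLemma}, observe that $\nu_{\tn_{n}} \rightharpoonup \nu_{\tn}$ because $\nu \mapsto \int \varphi \, d\nu$ is weak*-continuous on $\mathcal{F}_{r}(Y)$, pass the H\"older bound $|\int \psi \, d\mu_{n}| \leq \|\psi\|_{L^{q}(\nu_{\tn_{n}})}$ to the limit, and recover the density by $L^{p}$--$L^{q}$ duality; you merely make explicit the density-of-$C(Y)$ step that the paper leaves implicit, and you add the (genuinely needed, and omitted in the paper) total variation bound $|\mu|(Y) \leq 1$ that justifies "closed hence compact" via Banach--Alaoglu. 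Where you diverge is convexity: the paper again tests against $\psi \in C(Y)$ and uses H\"older together with the concavity of $t \mapsto t^{1/q}$, so that duality carries the whole proposition; you instead compute directly with Radon--Nikodym densities, using $\nu_{\tn_{i}} \ll \nu_{\tn}$ for $t \in (0,1)$, the pointwise identity $tg_{1} + (1-t)g_{2} = 1$, and Jensen's inequality for $|\cdot|^{p}$. Both are valid; your version is more concrete and avoids a second appeal to duality, at the small cost of handling the case $p = \infty$ separately (there the Jensen step should be replaced by the trivial bound $|tg_{1}f_{1} + (1-t)g_{2}f_{2}| \leq (1+\epsilon)(tg_{1} + (1-t)g_{2}) = 1+\epsilon$), while the paper's formulation with the exponent $q$ treats $p = \infty$ uniformly. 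Your use of an $\epsilon$-approximate minimiser in the convexity step is also fine, since membership in $\mathcal{N}$ is defined via the infimum.
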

\begin{proof} It is clear that $\mathcal{N}$ is balanced: if $\mu \in \mathcal{N}$ and $\alpha \in \C$ with $|\alpha| \leq 1$, then $\|\alpha \mu\|_{L^{p}(\nu_{\tn})} \leq \|\mu\|_{L^{p}(\nu_{\tn})}$ for any $\tn \in \mathcal{N}$. We next prove that $\mathcal{N}$ is closed in $\mathcal{M}(Y)$. The same argument will also reveal that to every $\mu \in \mathcal{N}$ there corresponds some $\tn \in \mathcal{P}_{r} := \mathcal{P}(\mathcal{F}_{r}(Y))$ such that $\|\mu\|_{L^{p}(\nu_{\tn})} \leq 1$. Pick a sequence $\{\mu_{j}\}_{j \in \N} \subset \mathcal{N}$ such that
\begin{displaymath} \mu_{j} \rightharpoonup \mu \in \mathcal{M}(Y) \end{displaymath}
and associate to each $\mu_{j}$ a probability $\tn_{j} \in \mathcal{P}$ such that
\begin{equation}\label{form27} \|\mu_{j}\|_{L^{p}(\nu_{\tn_{j}})} \leq 1 + \frac{1}{j}. \end{equation}
Since $\mathcal{F}_{r}(Y)$ is compact by Lemma \ref{compactnessLemma}, also $\mathcal{P}_{r}$ is compact in the weak* topology, and hence we may assume that $\tn_{j} \rightharpoonup \tn \in \mathcal{P}_{r}$ after passing to a subsequence. It remains to show that $\mu \in L^{p}(\nu_{\tn})$ with $\|\mu\|_{L^{p}(\nu_{\tn})} \leq 1$. We use duality: fix $\psi \in C(Y)$. Then, by H\"older's inequality and \eqref{form27}, we first have
\begin{displaymath} \left| \int \psi \, d\mu \right| \leq \limsup_{j \to \infty} \left| \int \psi \, d\mu_{j} \right| \leq \limsup_{j \to \infty} \left(\int |\psi|^{q} \, d\nu_{\tn_{j}} \right)^{1/q}. \end{displaymath}
Here $1/p + 1/q = 1$ (with $q = 1$ if $p = \infty$). Writing $\varphi := |\psi|^{q} \in C(Y)$, we further have
\begin{displaymath} \int \varphi \, d\nu_{\tn_{j}} = \frac{1}{r} \int \int \varphi \, d\nu \, d\tn_{j}(\nu) \to \frac{1}{r} \int \int \varphi \, d\nu \, d\tn(\nu) = \int \varphi \, d\nu_{\tn}, \end{displaymath} 
because the map $\nu \mapsto \int \varphi \, \nu$ is continuous $\mathcal{F}_{r}(Y) \to \R$ (where $\mathcal{F}_{r}(Y)$ is equipped with the weak* topology, as always). Thus,
\begin{displaymath} \left|\int \psi \, d\mu \right| \leq \left(\int |\psi|^{q} \, d\nu_{\tn} \right)^{1/q}, \end{displaymath} 
which shows that $\mu \in L^{p}(\nu_{\tn})$ with $\|\mu\|_{L^{p}(\nu_{\tn})} \leq 1$. Hence, $\mu \in \mathcal{N}$, and it has been established that $\mathcal{N}$ is closed.

It remains to prove the convexity of $\mathcal{N}$. Let $\mu_{1},\mu_{2} \in \mathcal{N}$, and let $\tn_{1},\tn_{2} \in \mathcal{P}_{r}$ be such that
\begin{equation}\label{form20} \|\mu_{1}\|_{L^{p}(\nu_{\tn_{1}})} \leq 1 \quad \text{and} \quad \|\mu_{2}\|_{L^{p}(\nu_{\tn_{2}})} \leq 1. \end{equation} 
Let $\lambda_{1},\lambda_{2} \in [0,1]$ with $\lambda_{1} + \lambda_{2} = 1$, and consider the probability $\tn := \lambda_{1}\tn_{1} + \lambda_{2}\tn_{2} \in \mathcal{P}_{r}$. Note that
\begin{equation}\label{form21} \lambda_{1}\nu_{\tn_{1}} + \lambda_{2}\nu_{\tn_{2}} = \nu_{\tn}. \end{equation}
To show that $\lambda_{1}\mu_{1} + \lambda_{2}\mu_{2} \in L^{p}(\nu_{\tn})$, we again employ duality. Fix $\psi \in C(Y)$, and let $1/p + 1/q = 1$ (with $q = 1$ if $p = \infty$). Then, using H\"older's inequality and the concavity of $t \mapsto t^{1/q}$, we infer that
\begin{align*} \left| \int \psi \, d[\lambda_{1}\mu_{1} + \lambda_{2}\mu_{2}] \right| & \leq \lambda_{1} \left| \int \psi \, d\mu_{1} \right| + \lambda_{2} \left| \int \psi \, d\mu_{2} \right| \\
& \stackrel{\eqref{form20}}{\leq} \lambda_{1} \left( \int |\psi|^{q} \, d\nu_{\tn_{1}} \right)^{1/q} + \lambda_{2} \left( \int |\psi|^{q} \, d\nu_{\tn_{2}} \right)^{1/q}\\
& \leq \left(\int |\psi|^{q} d[\lambda_{1} \nu_{\tn_{1}} + \lambda_{2} \nu_{\tn_{2}}] \right)^{1/q} \stackrel{\eqref{form21}}{=} \left(\int |\psi|^{q} \, d\nu_{\tn} \right)^{1/q}. \end{align*}
This proves that $\lambda_{1}\mu_{1} + \lambda_{2}\mu_{2} \in L^{p}(\nu_{\tn})$ with $\|\lambda_{1}\mu_{1} + \lambda_{2}\mu_{2}\|_{L^{p}(\nu_{\tn})} \leq 1$, and consequently $\lambda_{1}\mu_{1} + \lambda_{2}\mu_{2} \in \mathcal{N}$. \end{proof}

The next proposition gives a criterion for a measure to belong to $\mathcal{N}$; compare this with Corollary \ref{mainCor} to see where we are headed. 

\begin{proposition}\label{criterion} Let $1 \leq q < \infty$, let $r > 0$, and let $(Y,d)$ be a compact metric space. Assume that $\mu \in \mathcal{M}(Y)$ is a measure satisfying
\begin{equation}\label{form3} \|\psi\|_{L^{1}(\mu)} \leq \sup_{\tn \in \mathcal{P}_{r}} \|\psi\|_{L^{q}(\nu_{\tn})}, \qquad \psi \in C(Y), \end{equation} 
where $\calP_{r} := \calP(\mathcal{F}_{r}(Y))$. Then $\mu \in \mathcal{N}_{p}(r)$, where $1/p + 1/q = 1$ (and $p = \infty$ if $q = 1$).
\end{proposition}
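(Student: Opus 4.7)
The plan is a Hahn--Banach separation argument: show that hypothesis \eqref{form3} is exactly what is needed to rule out any separating functional, using that $\mathcal{N} := \mathcal{N}_{p}(r)$ has already been verified to be convex, balanced, and weak*-closed. Suppose for contradiction that $\mu \notin \mathcal{N}$. The previous proposition places $\mathcal{N}$ in the locally convex space $\mathcal{M}(Y)$ equipped with the weak* topology, so the geometric Hahn--Banach theorem yields a weak*-continuous linear functional strictly separating $\mu$ from $\mathcal{N}$. Since weak*-continuous functionals on $\mathcal{M}(Y) = C(Y)^{\ast}$ are precisely those of the form $\sigma \mapsto \int \psi \, d\sigma$ with $\psi \in C(Y)$, this produces $\psi \in C(Y)$ and $\alpha \in \R$ such that
$$\operatorname{Re} \int \psi \, d\mu > \alpha \geq \operatorname{Re} \int \psi \, d\sigma \quad \text{for every } \sigma \in \mathcal{N}.$$
Because $\mathcal{N}$ is balanced, I can replace $\sigma$ by $e^{i\theta}\sigma$ and optimise over $\theta \in [0,2\pi)$; this promotes the real-part separation to the absolute-value separation
$$\Big| \int \psi \, d\mu \Big| > \alpha \geq \sup_{\sigma \in \mathcal{N}} \Big| \int \psi \, d\sigma \Big|.$$

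The second step is to show that this $\psi$ violates \eqref{form3}, which is the desired contradiction. Fix an arbitrary $\tn \in \mathcal{P}_{r}$. Standard $L^{p}$--$L^{q}$ duality on $(Y,\nu_{\tn})$ gives
$$\|\psi\|_{L^{q}(\nu_{\tn})} = \sup\Big\{ \Big|\int g\psi \, d\nu_{\tn}\Big| : g \in L^{p}(\nu_{\tn}), \, \|g\|_{L^{p}(\nu_{\tn})} \leq 1 \Big\},$$
and the endpoint $q = 1$, $p = \infty$ is handled uniformly by taking $g := \overline{\operatorname{sgn}(\psi)}$. For any admissible $g$, the measure $\sigma := g \, d\nu_{\tn}$ satisfies $d\sigma/d\nu_{\tn} = g$, hence $\|\sigma\|_{L^{p}(\nu_{\tn})} \leq 1$ and $\sigma \in \mathcal{N}$. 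Since $\int \psi \, d\sigma = \int g\psi \, d\nu_{\tn}$, taking the supremum over $g$ and then over $\tn$ yields
$$\sup_{\sigma \in \mathcal{N}} \Big| \int \psi \, d\sigma \Big| \geq \sup_{\tn \in \mathcal{P}_{r}} \|\psi\|_{L^{q}(\nu_{\tn})} \stackrel{\eqref{form3}}{\geq} \|\psi\|_{L^{1}(\mu)} \geq \Big| \int \psi \, d\mu \Big|,$$
contradicting the previous display. Therefore $\mu \in \mathcal{N}$, which is exactly the conclusion of the proposition.

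The only real subtlety is the bookkeeping needed to upgrade the real-part separation from complex Hahn--Banach into the absolute-value separation displayed above; this is where the balanced hypothesis on $\mathcal{N}$ is crucial. Everything else is a routine invocation of the identifications $\mathcal{M}(Y) = C(Y)^{\ast}$ and $(L^{p}(\nu_{\tn}))^{\ast} = L^{q}(\nu_{\tn})$, and the fact that the family of reference measures $\{\nu_{\tn} : \tn \in \mathcal{P}_{r}\}$ used to define $\mathcal{N}$ is the same family appearing in the hypothesis \eqref{form3}.
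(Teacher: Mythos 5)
Your proof is correct and follows essentially the same route as the paper's first argument: Hahn--Banach separation of $\mu$ from the convex, balanced, weak*-compact set $\mathcal{N}_{p}(r)$, followed by $L^{p}$--$L^{q}$ duality to show that measures of the form $g\,d\nu_{\tn}$ with $\|g\|_{L^{p}(\nu_{\tn})}\leq 1$ lie in $\mathcal{N}_{p}(r)$ and already achieve $\sup_{\tn}\|\psi\|_{L^{q}(\nu_{\tn})}$, contradicting the separation via \eqref{form3}. The only cosmetic difference is that you take the supremum over all admissible densities $g$ and all $\tn$, whereas the paper picks a single near-extremal $\tn$ and the extremal density $\rho$; your explicit upgrade from real-part to absolute-value separation using balancedness is a detail the paper leaves implicit.
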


\begin{proof} Assume to the contrary that $\mu \notin \mathcal{N}_{p}(r) =: \mathcal{N}$. The dual space of $\mathcal{M}(Y)$ equipped with the weak* topology is $C(Y)$ (because the weak* topology is by definition the coarsest topology which makes the functionals $\mu \mapsto \int \psi \, d\mu$, for $\psi \in C(Y)$, continuous; see also \cite[Theorem 3.10]{MR1157815}). Thus, because $\mathcal{N} \subset \mathcal{M}(Y)$ is convex, balanced, and compact, there exists by \cite[Theorem 3.7]{MR1157815} a number $\tau < 1$ and a function $\psi \in C(Y)$ such that
\begin{equation}\label{form22} \sup_{\nu \in \mathcal{N}} \left| \int \psi \, d\nu \right| < \tau \cdot \int \psi \, d\mu. \end{equation} 
By \eqref{form3}, we then infer that there exists $\tn \in \mathcal{P}_{r}$ such that
\begin{equation}\label{form2} \tau \cdot \|\psi\|_{L^{1}(\mu)} \leq \|\psi\|_{L^{q}(\nu_{\tn})}. \end{equation} 
There are now at least two ways to complete the proof, each of them so short that we record both. In the first argument, choose (by duality) a function $\rho \in L^{p}(\nu_{\tn})$ with $\|\rho\|_{L^{p}(\nu_{\tn})} = 1$ such that
\begin{equation}\label{form4} \left| \int \psi \rho \, d\nu_{\tn} \right| = \|\psi\|_{L^{q}(\nu_{\tn})}. \end{equation} 
Note that $\rho \, d\nu_{\tn} \in \mathcal{N}_{p}(r)$ by definition. It follows that
\begin{displaymath} \|\psi\|_{L^{q}(\nu_{\tn})} \stackrel{\eqref{form4}}{=} \left| \int \psi \rho \, d\nu_{\tn} \right| \stackrel{\eqref{form22}}{<} \tau \cdot \int \psi \, d\mu \stackrel{\eqref{form2}}{\leq} \|\psi\|_{L^{q}(\nu_{\tn})}.   \end{displaymath} 
This contradiction completes the proof.

For the second argument, let $\nu_{\tn}$ be as in \eqref{form2}, and define the norm
\begin{displaymath} \varphi \mapsto p(\varphi) := \left( \int |\psi|^{q} \, d\nu_{\tn} \right)^{1/q}, \qquad \varphi \in C(Y). \end{displaymath}
Then \eqref{form2} implies that 
\begin{displaymath} \left|\int \lambda \psi \, d\mu \right| \leq p(\lambda \psi), \qquad \lambda \geq 0, \end{displaymath} 
which means that the linear functional $\varphi \mapsto \int \varphi \, d\mu$ is dominated by $p$ on the $1$-dimensional subspace $\spa(\psi) \subset C(Y)$. Therefore the Hahn-Banach and Riesz representation theorems (see \cite[Theorem 3.3]{MR1157815}) give a measure $\nu \in \mathcal{M}(Y)$ which agrees with $\mu$ on $\psi$ and is dominated by $p$ everywhere on $C(Y)$. This implies by duality that $\nu \in \mathcal{N}_{p}(r)$, whence
\begin{displaymath} \int \psi \, d\mu = \int \psi \, d\nu \stackrel{\eqref{form22}}{<} \int \psi \, d\mu.  \end{displaymath}
Again, a contradiction has been reached. \end{proof}

Now we have all the ingredients to prove Theorem \ref{main}, which we repeat here for convenience:
\begin{thm} Let $(X,d,\mu)$ be a complete, doubling, and quasiconvex space which is $A_{\infty}$ on curves. Then, there exist constants $p > 1$ and $A \geq 1$ such that the following holds. For any $x \in X$ and $0 < r \leq \diam(X)$, the normalised restriction 
\begin{displaymath} \mu_{x,r} := \frac{\mu\llcorner_{B(x,r)}}{\mu(B(x,r))} \end{displaymath}
has a $1$-rectifiable representation of length $r$ in $L^{p}$, and moreover $\|\mu_{x,r}\|_{L^{p}(r)} \leq A$. \end{thm}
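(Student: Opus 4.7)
The strategy is to combine Corollary \ref{mainCor} (the pointwise ``inverse Loewner'' estimate for individual bounded Borel functions) with Proposition \ref{criterion} (the Hahn--Banach criterion that converts such a family of pointwise estimates into a genuine $1$-rectifiable representation). The former provides, for each test function $\psi$, a single continuum and a single $1$-Frostman measure $\nu$ realising the required $L^{1}$-against-$L^{q}$ bound; the latter aggregates these deterministic ``test measures'' into a probability $\tn \in \mathcal{P}_{r}$ that witnesses $\mu_{x,r} \in \mathcal{N}_{p}(r)$.

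Fix $x \in X$ and $0 < r \leq \diam(X)$. By completeness and doubling, $Y := \overline{B(x, Kr)}$ is compact for every $K \geq 1$; I choose $K$ large enough -- depending only on the doubling and quasiconvexity constants of $X$ -- that every continuum supplied by Corollary \ref{mainCor} applied to $B(x,r)$ lies in $Y$. Inspection of the proofs of Propositions \ref{mainProp} and \ref{mainProp2} shows that any such $\gamma$ is built from sub-continua in dyadic sub-cubes of a starting cube $Q$ with $\ell(Q) \sim r$ and $Q \cap B(x,r) \neq \emptyset$, joined by quasiconvex connections of length $\lesssim \ell(Q)$; since $\calH^{1}(\gamma) \leq \ell(Q) \lesssim r$, the curve sits in a ball of absolute radius $O(r)$ about $x$. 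Let $\mathcal{F}_{r}(Y)$ and $\mathcal{P}_{r} := \mathcal{P}(\mathcal{F}_{r}(Y))$ be as in Section \ref{s:representations}, and let $q \in [1,\infty)$ and $C \geq 1$ be the constants furnished by Corollary \ref{mainCor}. Set $p := q/(q-1) \in (1, \infty]$.

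For any $\psi \in C(Y)$, extend $\psi$ by zero to a bounded Borel function on $X$, and invoke Corollary \ref{mainCor}: there exist a continuum $\gamma \subset Y$ of length $\leq r$ and a $1$-Frostman measure $\nu$ supported on $\gamma$ with
\[ \int |\psi| \, d\mu_{x,r} \leq C \left( \frac{1}{r} \int |\psi|^{q} \, d\nu \right)^{1/q}. \]
Since $\nu \in \mathcal{F}_{r}(Y)$, the Dirac mass $\tn := \delta_{\nu}$ lies in $\mathcal{P}_{r}$ and satisfies $\nu_{\tn} = \nu/r$. Hence the preceding display rewrites as
\[ \|\psi\|_{L^{1}(\mu_{x,r})} \leq C \, \|\psi\|_{L^{q}(\nu_{\tn})} \leq C \sup_{\tn' \in \mathcal{P}_{r}} \|\psi\|_{L^{q}(\nu_{\tn'})}, \qquad \psi \in C(Y). \]
Dividing by $C$, the measure $\mu_{x,r}/C$ satisfies the hypothesis \eqref{form3} of Proposition \ref{criterion} with exponent $q$. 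Applying that proposition produces some $\tn \in \mathcal{P}_{r}$ with $\mu_{x,r}/C \ll \nu_{\tn}$ and $\|d(\mu_{x,r}/C)/d\nu_{\tn}\|_{L^{p}(\nu_{\tn})} \leq 1$; multiplying through by $C$ yields $\|\mu_{x,r}\|_{L^{p}(r)} \leq C =: A$ with $p > 1$, as required.

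The main obstacle is the localisation step: verifying that the continuum output by Corollary \ref{mainCor} actually sits inside a fixed compact set $Y = \overline{B(x, Kr)}$, so that $\nu \in \mathcal{F}_{r}(Y)$ and $\delta_{\nu} \in \mathcal{P}_{r}$ are well-defined objects in the framework of Section \ref{s:representations}. Once that bookkeeping is in place, the proof is merely a clean pairing of the pointwise estimate from Corollary \ref{mainCor} with the Hahn--Banach duality of Proposition \ref{criterion}, linked by the trivial deterministic choice $\tn = \delta_{\nu}$.
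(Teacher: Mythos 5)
Your proof is correct and follows essentially the same route as the paper's: Corollary \ref{mainCor} supplies the test inequality for each $\psi$, the Dirac mass $\delta_{\nu}$ converts it into hypothesis \eqref{form3}, and Proposition \ref{criterion} closes the argument by duality. The only (cosmetic) difference is the localisation step: rather than inspecting the constructions in Propositions \ref{mainProp}--\ref{mainProp2} to place $\gamma$ in $\overline{B(x,Kr)}$, the paper applies Corollary \ref{mainCor} to $\psi\mathbf{1}_{B(x,r)}$, so that positivity of the left-hand side forces $\gamma \cap B(x,r) \neq \emptyset$ and hence, since $\calH^{1}(\gamma) \leq r$, $\gamma \subset \overline{B(x,2r)}$.
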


\begin{proof} Let $1 \leq q < \infty$ and $C \geq 1$ be the parameters from Corollary \ref{mainCor}. We verify the criterion in Proposition \ref{criterion} applied to the measure $\tilde{\mu}_{x,r} := C^{-(1 + 1/q)} \cdot \mu_{x,r}$ and the compact metric space $Y = \overline{B(x,2r)}$. Let $\psi \in C(Y)$. If
\begin{displaymath} \|\psi\|_{L^{1}(\tilde{\mu}_{x,r})} = 0, \end{displaymath}
condition \eqref{form3} is trivial. Otherwise, apply Corollary \ref{mainCor} to the bounded Borel function $\psi \mathbf{1}_{B(x,r)}$. The conclusion is that there exists a continuum of length $\leq r$ and a $1$-Frostman measure $\nu$ supported on $\gamma$ such that
\begin{equation}\label{form24} 0 < \|\psi\|_{L^{1}(\tilde{\mu}_{x,r})} \leq \left(\frac{1}{Cr} \int_{\gamma \cap B(x,r)} |\psi|^{q} \, d\nu \right)^{1/q}. \end{equation}
Clearly $\gamma \cap B(x,r) \neq \emptyset$, which implies that $\gamma \subset Y$, and hence $\nu \in \mathcal{F}_{r}(Y)$. Therefore, the Dirac mass on $\gamma$, denoted $\delta(\gamma)$, is an element in $\calP := \mathcal{P}(\mathcal{F}_{r}(Y))$, and the inequality \eqref{form24} states that
\begin{displaymath} \|\psi\|_{L^{1}(\tilde{\mu}_{x,r})} \leq \|\psi\|_{L^{q}(\nu_{\delta(\gamma)})} \leq \sup_{\tn \in \mathcal{P}} \|\psi\|_{L^{q}(\nu_{\tn})}. \end{displaymath} 
We may now infer from Proposition \ref{criterion} that $\tilde{\mu}_{x,r} \in \mathcal{N}_{p}(r)$. It follows that
\begin{displaymath} \|\mu_{x,r}\|_{L^{p}(r)} \leq C^{1 + 1/q}. \end{displaymath} 
This easily implies the theorem with $A := C^{1 + 1/q}$.
\end{proof}

\bibliographystyle{plain}
\bibliography{references}

\end{document}